\documentclass[11pt]{amsart}

\usepackage{amsmath,amssymb,amsthm,mathtools}
\usepackage{enumitem}
\usepackage{hyperref}

\newcommand{\PP}{\mathbb P}
\newcommand{\CC}{\mathbb C}
\newcommand{\RR}{\mathbb R}
\newcommand{\ZZ}{\mathbb Z}
\newcommand{\cO}{\mathcal O}
\newcommand{\F}{\mathbb F}
\newcommand{\eps}{\varepsilon}
\newcommand{\Bl}{\operatorname{Bl}}
\newcommand{\NS}{\operatorname{NS}}
\newcommand{\Pic}{\operatorname{Pic}}
\newcommand{\Span}{\operatorname{Span}}

\newtheorem{theorem}{Theorem}[section]
\newtheorem{proposition}[theorem]{Proposition}
\newtheorem{lemma}[theorem]{Lemma}
\newtheorem{corollary}[theorem]{Corollary}
\newtheorem{remark}[theorem]{Remark}
\newtheorem{question}[theorem]{Question}

\title[Dihedral reflections and irrational Seshadri constants]{Dihedral reflections and an infinite series of irrational Seshadri constants}
\author[G. Malara at al.]{Grzegorz Malara, Łukasz Merta, Justyna Szpond and Marcin Zieli\'nski}
\begin{document}

\begin{abstract}
Laface and Ugaglia recently constructed an irrational one-point Seshadri constant on the blow-up of $\PP^2$ at nine very general points by combining a dihedral orbit on $\PP^1\times\PP^1$, a sequence of de Jonqui\`eres transformations, and a reflection argument along a $(-4)$-curve with balanced normal bundle.
We show that the same mechanism extends uniformly to every odd integer $n\geq 5$.
For $n=2k+1$ we prove
$$
\eps\bigl(\cO_{\PP^1\times\PP^1}(n-4,1);p_1,\ldots,p_{2n}\bigr)=\sqrt{\frac{n-4}{n}}
$$
at $2n$ very general points, and already at a very general free orbit of a fixed dihedral group of order $2n$.
For every odd $n\geq 7$ this produces an explicit ample line bundle on the blow-up of $\PP^2$ at $k+7=(n+13)/2$ very general points whose one-point Seshadri constant at a very general point equals
$$
2\sqrt{n(n-4)}.
$$
More precisely, after one quadratic transformation we obtain the ample divisor
$$
L_n=(3n-4)H-nE_1-(n-2)(E_2+\cdots+E_5)-4(E_6+\cdots+E_{k+5})-2(E_{k+6}+E_{k+7}),
$$
with $L_n^2=4n(n-4)$ and $\eps(L_n;x)=\sqrt{L_n^2}$.
We also isolate an abstract balanced-reflection principle underlying the construction: a nef class on a special fiber can be reflected across a rational curve of square $-2a$ whenever the curve has normal bundle $\cO_{\PP^1}(-a)^{\oplus 2}$ in the total space, and the reflected class is nef on very general fibers.
\end{abstract}

\maketitle

\section{Introduction}

Let $L$ be an ample line bundle on a smooth projective surface $X$ and let $x\in X$.
The one-point Seshadri constant is
$$
\eps(L;x)=\sup\{t\geq 0:\pi^*L-tE_x\text{ is nef}\},
$$
where $\pi:\Bl_xX\to X$ is the blow-up and $E_x$ is its exceptional curve.
One always has the volume bound
$$
\eps(L;x)\leq \sqrt{L^2}.
$$
If the inequality is strict, then on a surface the Seshadri constant is computed by a curve and is therefore rational.
Consequently an irrational one-point Seshadri constant must attain the volume bound and $L^2$ must be a nonsquare.

Laface and Ugaglia recently produced the first unconditional example of this phenomenon on a blow-up of the plane at very general points \cite{LU}.
Their construction proves
$$
\eps\bigl(\cO_{\PP^1\times\PP^1}(1,1);p_1,\ldots,p_{10}\bigr)=\frac1{\sqrt5}
$$
for ten very general points and then converts this multipoint equality into the one-point equality
$$
\eps\bigl(9H-3(E_1+\cdots+E_5)-2(E_6+\cdots+E_9);x\bigr)=2\sqrt5
$$
on the blow-up of $\PP^2$ at nine very general points.
Their proof completes, in the ten-point case, the reflection strategy developed by Dionne and Roth for Seshadri constants on $\PP^1\times\PP^1$ \cite{DR}.

The purpose of the present note is to observe that the dihedral construction is not restricted to ten points.
The special curve, the de Jonqui\`eres dynamics, the limiting irrational nef ray, and the balanced normal-bundle calculation all admit a uniform form for $2n$ points when $n$ is odd.
The resulting statement is the following.

\begin{theorem}\label{thm:F0-main}
Let $n\geq 5$ be odd and let $p_1,\ldots,p_{2n}$ be very general points of $\F_0=\PP^1\times\PP^1$.
Then
$$
\eps\bigl(\cO_{\F_0}(n-4,1);p_1,\ldots,p_{2n}\bigr)=\sqrt{\frac{n-4}{n}}.
$$
The same equality holds when $p_1,\ldots,p_{2n}$ form a very general free orbit of the fixed dihedral group $G_n$ defined in Section~\ref{sec:dihedral}.
\end{theorem}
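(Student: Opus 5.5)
Write $X=\Bl_{p_1,\ldots,p_{2n}}\F_0$ with exceptional curves $E_1,\ldots,E_{2n}$, let $E=\sum_i E_i$, and let $L=\cO_{\F_0}(n-4,1)$, so that $L^2=2(n-4)$. The multipoint Seshadri constant is the supremum of $t$ with $L-tE$ nef. Since $(L-tE)^2=2(n-4)-2nt^2$, the volume bound gives $\eps\le\sqrt{(n-4)/n}$. The whole content of the theorem is the reverse inequality: the class
$$
D=L-\sqrt{\tfrac{n-4}{n}}\,E,\qquad D^2=0,
$$
is nef on $X$ for very general points. Because nefness specializes well (a curve on a very general fiber degenerates to an effective cycle on a special fiber), it is enough to prove that $D$ is nef on the very general fiber of a family. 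That reduces the first statement to the second one, and the plan is to prove the dihedral version and then deduce the general one, or to run both arguments at once in a family whose special fiber is dihedral.

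\textbf{Special curve and nefness on the dihedral fiber.} Let the points form a free $G_n$-orbit. Then I would write down a $G_n$-invariant rational curve $C$ through all $2n$ points whose class has the form $\cO(a,b)-mE$ with $C^2=-2a'$ (for $n=5$ this should be the $(-4)$-curve of Laface--Ugaglia). The de Jonqui\`eres transformations preserving the orbit generate an infinite sequence $C_j$ of $G_n$-invariant $(-2a')$-curves, or of $(-1)$-curves. By tracking their classes in the rank-$3$ lattice $\Span\{H_1,H_2,E\}$, I expect to check that the ratios $\bigl(C_j\cdot L\bigr)/\bigl(C_j\cdot E\bigr)$ converge to $\sqrt{(n-4)/n}$ from the correct side. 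The expected outcome is that $D$ lies in the closed cone spanned by these curves and their limit ray; in particular $D$ is a limit of classes that are nef on the special fiber. The $G_n$-invariance lets me average any curve over the group, so that negativity against $D$ need only be tested on symmetric classes, i.e.\ in this rank-$3$ lattice. There Zariski decomposition together with $D\cdot C_j\ge 0$ and $D^2=0$ should give nefness.

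\textbf{Reflection to the general fiber.} On the special fiber $D$ may fail to be the class we need, because $C$ is rigid there and $D\cdot C$ could be negative for the relevant reflected class. Following the balanced-reflection principle described in the abstract, I take a one-parameter deformation of the points away from the orbit. I then check that $C$ has normal bundle $\cO_{\PP^1}(-a)^{\oplus2}$ in the total space, where $C^2=-2a$. Given that, a class $N$ nef on the special fiber reflects to $s_C(N)=N+\tfrac{N\cdot C}{a}C$, and this reflected class is nef on the very general fiber. Applying this to the nef limit class from the previous step recovers $D$ on the general fiber.

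\textbf{Main obstacle.} The main obstacle is the normal-bundle computation, namely showing that the splitting is balanced rather than $\cO(-2a+1)\oplus\cO(-1)$. I would first try to exhibit a first-order deformation of the orbit along which $C$ does not survive even infinitesimally. The $G_n$-equivariant structure should force the two summands to be conjugate, and hence equal in degree. The second delicate point is verifying the sign and the limit in the de Jonqui\`eres recursion uniformly in odd $n$.
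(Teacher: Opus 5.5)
Your architecture (special dihedral fiber, invariant negative rational curve, de Jonqui\`eres dynamics, balanced reflection, volume bound) matches the paper's. However, the two load-bearing steps fail as you propose them.

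\textbf{The nef class on the special fiber.} You misidentify this class and do not establish its nefness. You assert that $D$ is a limit of nef classes on the dihedral fiber, but this is false. The special orbit $G_n\cdot(1,1)$ lies on $\Gamma=\{z=x^{n-2}\}$, whose strict transform $\gamma=(n-2)H_1+H_2-E$ has $\gamma^2=-4$. Then $D\cdot\gamma=2n-6-2\sqrt{n(n-4)}<0$ for every $n\geq5$, since $(n-3)^2<n(n-4)$. Because $s_\gamma$ is an involution, the class that must be nef on $X_0$ is $s_\gamma(D)$. This is a positive multiple of $R_n=\alpha H_1+\beta H_2-E$ with $\alpha+\beta=\alpha\beta=n$.

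Tracking negative curves $C_j$ cannot prove nefness. Their Seshadri ratios give only upper bounds, and normalized limits of curve classes are only pseudo-effective. Nor does ``Zariski decomposition with $D\cdot C_j\geq0$, $D^2=0$'' exclude other invariant negative curves. Moreover, the de Jonqui\`eres maps do not act on $X_0$ or preserve the orbit. They are isomorphisms $X_0\cong X_j=\Bl_{P_j}S$ onto blow-ups at different configurations, and one must check nondegeneracy of the hypotheses at every step.

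The missing idea is to transport \emph{nef} classes rather than curves:
\begin{enumerate}[label=\textup{(\roman*)}]
\item $T_n^j(H_2)$ is the pullback of a ruling class under these isomorphisms, so it is nef on $X_0$.
\item $T_n$ has eigenvalues $\lambda,1,\lambda^{-1}$ with $\lambda=(n-2+\sqrt{n(n-4)})/2$, and $R_n$ is the $\lambda$-eigenvector.
\item $H_2$ has positive $R_n$-component, so $\lambda^{-j}T_n^j(H_2)$ converges to a positive multiple of $R_n$.
\item The nef cone is closed, so $R_n$ is nef.
\end{enumerate}

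\textbf{The balanced normal bundle.} Your route here does not work. From $0\to\cO_{\PP^1}(-4)\to N\to\cO_{\PP^1}\to0$, first-order non-survival of $\Gamma_0$ gives only $H^0(N)=0$. That excludes $\cO(-4)\oplus\cO$ but not $\cO(-3)\oplus\cO(-1)$. Equivariance cannot help either. In $\cO(-1)\oplus\cO(-3)$ the summand $\cO(-1)$ is the unique maximal destabilizing subsheaf, so every automorphism preserves it and no group element can exchange the summands.

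What is required is $H^0(N(1))=0$, together with $\deg N=(2n-4)-2n=-4$. The paper checks this directly for the family $G_n\cdot(1+t,1)$. A section of $N(1)$ is a pair $(f,g)$ with $\deg f\leq 2n-3$ and $\deg g\leq1$. The normal slopes of the moving sections are $-(n-2)x^{n-2}$ at $x^n=1$ and $+(n-2)x^{n-2}$ at $x^n=u^n$. Hence $f+(n-2)x^{n-2}g=(x^n-1)a$ and $f-(n-2)x^{n-2}g=(x^n-u^n)b$ with $\deg a,\deg b\leq n-3$. Comparing degrees forces $a=b$. Then $2(n-2)x^{n-2}g=(u^n-1)a$ forces $g=a=0$, and hence $f=0$.
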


The case $n=5$ is precisely the multipoint theorem of Laface--Ugaglia.
For $n\geq 7$ the theorem gives new irrational constants already on $\PP^1\times\PP^1$.
More importantly, the orbit formulation descends to a one-point problem on the quotient and then lifts, after resolving the quotient and deforming the plane model, to a smooth rational surface.

Write $n=2k+1$.
Our main smooth-surface statement is the following.

\begin{theorem}\label{thm:plane-main}
Let $n=2k+1\geq 7$, let
$$
r=k+7=\frac{n+13}{2},
$$
and let $X_r=\Bl_{p_1,\ldots,p_r}\PP^2$ for very general points $p_i$.
Then the divisor
\begin{equation}\label{eq:Ln}
L_n=(3n-4)H-nE_1-(n-2)(E_2+\cdots+E_5)-4(E_6+\cdots+E_{k+5})-2(E_{k+6}+E_{k+7})
\end{equation}
is ample and satisfies
$$
\eps(L_n;x)=2\sqrt{n(n-4)}=\sqrt{L_n^2}
$$
for a very general point $x\in X_r$.
In particular, $\eps(L_n;x)$ is irrational.
\end{theorem}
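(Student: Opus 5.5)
We plan to deduce Theorem~\ref{thm:plane-main} from Theorem~\ref{thm:F0-main} in its orbit form, following the Laface--Ugaglia passage from ten points on $\F_0$ to nine points in $\PP^2$. Take a very general free $G_n$-orbit $p_1,\ldots,p_{2n}$ and let $\phi:\F_0\to Y=\F_0/G_n$ be the quotient of degree $2n$. The $G_n$-invariant class $\cO_{\F_0}(n-4,1)$ descends, up to a multiple, to a $\QQ$-Cartier class $M$ on $Y$ with $\phi^*M=\cO(n-4,1)$. The orbit becomes a single general point $y\in Y$. For invariant divisors, nefness of $\phi^*M-t\sum E_{p_i}$ is equivalent to nefness of $M-tE_y$ on the blow-up of $Y$ at $y$, because pullback of curves along the finite map preserves and reflects nefness. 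Hence
$$\eps(M;y)=\eps\bigl(\cO(n-4,1);p_1,\ldots,p_{2n}\bigr)=\sqrt{(n-4)/n}.$$
Since $M^2=2(n-4)/(2n)$, this equals $\sqrt{M^2}$. Scaling, the same holds for any integral multiple of $M$.

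Next we resolve $Y$. The singularities of $Y$ come from points with nontrivial stabilizer. For $G_n$ dihedral of order $2n$ with $n$ odd, acting on $\F_0$ by a rotation of order $n$ on one factor together with an involution, these are fixed points of the rotation and of the $n$ conjugate reflections. They give cyclic quotient singularities: $A_1$ points and a few points of type $\frac1n(1,\cdot)$. Let $\tilde Y\to Y$ be the minimal resolution. We then identify $\tilde Y$ with a blow-up of $\PP^2$, possibly after contracting some $(-1)$-curves, in which the exceptional chains become the curves $E_i-E_j$, $H-E_i-E_j-E_l$ and similar. The count $r=k+7$ should match $\rho(\tilde Y)=\rho(Y)+\#(\text{exceptional curves})$. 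The pullback of $M$, suitably multiplied, is then written in the basis $H,E_i$. We expect to land on a class whose image after one quadratic transformation is $L_n$, with $L_n^2=4n(n-4)$ equal to $2n\cdot 2(n-4)$ up to the chosen scaling. At the resolved special surface $\tilde Y$, pulling back a nef class $M-tE_y$ with $y$ a smooth point gives nef. So $\eps(L_n^{\rm sp};y)\ge\sqrt{L_n^2}$ on the special configuration, and the upper bound is automatic.

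Finally we deform. The configuration of points on $\PP^2$ realizing $\tilde Y$ is special, since the $(-2)$-curves are present. Seshadri constants at a very general point are lower semicontinuous in families, in the sense that nefness of $\pi^*L-tE$ specializes. The lower bound $\eps\ge\sqrt{L_n^2}$ therefore transfers from the special fiber to very general points $p_1,\ldots,p_r$ and very general $x$. Nefness is an open-type condition on very general fibers, since a curve on a very general fiber specializes to an effective curve. Ampleness of $L_n$ on $X_r$ follows from Nakai--Moishezon: $L_n^2>0$, and $L_n\cdot C>0$ for the finitely many relevant negative curves. On a very general blow-up, $L_n$ is nef as a limit, and positivity on $(-1)$-curves can be checked after the quadratic transformation reduces degrees. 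Irrationality follows because $4n(n-4)$ is not a square for odd $n\ge7$: $n$ and $n-4$ are coprime odd numbers and not both squares, as two odd squares differ by at least $8$ unless one of them is $1$.

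The main obstacle is the explicit identification of $\tilde Y$ as a blow-up of $\PP^2$ and the exact bookkeeping of the pulled-back class into the form \eqref{eq:Ln}. This requires computing the singularity types of $\F_0/G_n$, the discrepancies, and a sequence of elementary transformations. I would first work out $n=7$ by hand and compare it with the known $n=5$ computation of \cite{LU}, and then extract the uniform pattern in $k$.
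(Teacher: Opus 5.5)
Your overall route is the paper's: orbit theorem, quotient $Y=\F_0/G_n$, minimal resolution $Z$, plane model, deformation to very general points, one quadratic transformation, ampleness. However, the step that produces \emph{this} divisor $L_n$ on \emph{this} number $r=k+7$ of points is only announced (``we expect to land on\ldots''). As written, nothing shows that the class you obtain is $L_n$, or that $r=k+7$.

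The missing input is the explicit plane model. In the paper it comes from the ruling $f:Z\to\PP^1$ induced by $x^n+x^{-n}$:
\begin{enumerate}[label=\textup{(\roman*)}]
\item over $\pm2$ the reducible fibers are chains $(U_j,V_j,W_j)$ with self-intersections $(-2,-1,-2)$;
\item over $\infty$ the fiber is the chain $G_1,G_2,T,H_1,\ldots,H_k$, obtained from $n/2=[k+1,2]$ and $n/k=[3,2,\ldots,2]$ joined by a $(-1)$-curve $T$ of multiplicity $n$;
\item the invariant curve $B=\{z=x^2\}$ gives a section $C$ with $C^2=\frac2n-\frac12-\frac12-\frac2n=-1$.
\end{enumerate}
Contracting $V_j,W_j,T,G_2,H_1,\ldots,H_k$ and then $C$ shows that $Z$ itself is $\Bl_{k+7}\PP^2$. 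So $\rho(Z)=k+8=r+1$, not $r$, and no preliminary contractions are needed. The centers are infinitely near, however, so the deformation must run over the space of iterated blow-ups allowing infinitely near points, not over configurations of points of $\PP^2$. Finally, $M=\nu^*A$ is forced by $M\cdot C=n-2$, $M\cdot F=2n$ and orthogonality to the $\nu$-exceptional curves to be $(3n-2;\,n-2,n^4,4^k,2^2)$. The Cremona map based at three of the $n$-points turns this into $L_n$.

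The ampleness step fails as written. Nakai--Moishezon needs $L_n\cdot C>0$ for \emph{every} curve. On the blow-up of $r\ge10$ very general points there are infinitely many $(-1)$-curves, and ruling out curves of square $\le-2$ is an open SHGH-type problem. Neither ``the finitely many relevant negative curves'' nor a quadratic transformation reduces this to a finite check. You do correctly get that $L_n$ is nef with $L_n^2>0$, so what must be excluded is a curve with $L_n\cdot C=0$.

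The paper does this with Hanumanthu's criterion, which reduces ampleness to finitely many numerical inequalities that hold for $n\ge7$. Alternatively you can stay inside your degeneration. Such a curve would specialize to an effective divisor on $Z$ orthogonal to $\nu^*A$, hence to a nonnegative combination of $U_j,W_j,G_1,G_2,H_i$. In the marking of $M$, such a class has degree equal to its multiplicity at the first point, which forces a line. Then $m_2+m_3=2$, $m_4+m_5=2$, or $\sum_{i\ge6}m_i\ge k+1$ contradicts very generality.

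One smaller point: what you need is that nefness passes from the special fiber to very general fibers, not that it ``specializes''. Your supporting sentence about curves on very general fibers specializing to effective curves is the correct argument.
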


The first new case is $n=7$, where $r=10$ and
$$
L_7=17H-7E_1-5(E_2+E_3+E_4+E_5)-4(E_6+E_7+E_8)-2(E_9+E_{10}),
$$
so that
$$
L_7^2=84,\\;\; \eps(L_7;x)=2\sqrt{21}.
$$

There are two conceptual points in the proof.
The first is a dynamical construction of an irrational nef isotropic class by iterating explicit de Jonqui\`eres transformations.
The second is a deformation-theoretic reflection across a rational $(-4)$-curve whose normal bundle in the threefold total space is $\cO_{\PP^1}(-2)^{\oplus2}$.
The second point is sufficiently general to be isolated independently of the dihedral situation.
We do this in Section~\ref{sec:principle}, where we formulate what may be viewed as the Laface--Ugaglia principle in abstract form.

The proof of Theorem~\ref{thm:F0-main} occupies Sections~\ref{sec:dihedral}--\ref{sec:reflection}.
In Sections~\ref{sec:quotient}--\ref{sec:plane} we analyze the quotient for arbitrary odd $n$, resolve its six singularities, construct a uniform plane model, and deform the corresponding nef class to very general plane points.
Finally, Section~\ref{sec:ampleness} proves ampleness of \eqref{eq:Ln} using the criterion of Hanumanthu \cite{Hanumanthu} and completes the proof of Theorem~\ref{thm:plane-main}.

\section{The balanced-reflection principle}\label{sec:principle}

We begin with the deformation-theoretic mechanism that turns a nef class on a special fiber into a different nef class on very general fibers.
The statement is an abstract version of the multiplicity-reflection argument used in \cite{LU}, ultimately based on Petrakiev's specialization method \cite{Petrakiev}.

\begin{proposition}[Balanced reflection principle]\label{prop:balanced-reflection}
Let $B$ be a smooth connected complex curve with distinguished point $0$, and let $f:\mathcal X\to B$ be a smooth projective threefold whose fibers are smooth projective surfaces.
Assume that the N\'eron--Severi spaces of the fibers are identified by a marking.
Suppose that the special fiber $X_0$ contains a smooth rational curve $\Gamma$ with class $\gamma$ such that
$$
\gamma^2=-2a,\\;\; N_{\Gamma/\mathcal X}\simeq \cO_{\PP^1}(-a)\oplus\cO_{\PP^1}(-a)
$$
for some integer $a>0$.
If $D\in\NS(X_0)_{\RR}$ is nef, then the marked class
\begin{equation}\label{eq:reflection}
s_\gamma(D)=D+\frac{D\cdot\gamma}{a}\,\gamma
\end{equation}
is nef on very general fibers of $f$.
\end{proposition}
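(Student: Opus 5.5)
The plan is to argue by contradiction and to reduce everything to a statement about limits of curves on $X_0$. First I record that $s_\gamma$ is an isometric involution of $\NS_\RR$. Indeed $s_\gamma(\gamma)=\gamma+\frac{-2a}{a}\gamma=-\gamma$, and $s_\gamma$ fixes $\gamma^\perp$, so it is the orthogonal reflection in $\gamma$. Hence $s_\gamma(D)\cdot c=D\cdot s_\gamma(c)$ for every class $c$.

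Now suppose $s_\gamma(D)$ fails to be nef on very general fibers. Since the relative Hilbert scheme of $f$ has countably many components, there is a single class $c$ and a component dominating $B$ such that every very general fiber $X_t$ carries an irreducible curve $C_t$ of class $c$ with $s_\gamma(D)\cdot c<0$. Taking flat limits, $c$ is represented on $X_0$ by an effective cycle $C_0$. Nefness of $D$ alone gives only $D\cdot c\ge 0$. It therefore suffices to prove the following reflection statement: the class
$$
s_\gamma(c)=c+\frac{c\cdot\gamma}{a}\gamma
$$
is a nonnegative combination of classes of effective curves on $X_0$, or at least pairs nonnegatively with every nef class on $X_0$. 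Granting this, $s_\gamma(D)\cdot c=D\cdot s_\gamma(c)\ge 0$, which contradicts the choice of $c$. Only the case $c\cdot\gamma<0$ needs work, so that $\Gamma$ is a fixed component of the limit. Indeed, if $c\cdot\gamma\ge 0$ then $s_\gamma(c)=c+(\text{nonnegative})\gamma$ is visibly effective.

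To prove the reflection statement I will blow up $\Gamma$ in the total space, $\sigma:\widetilde{\mathcal X}\to\mathcal X$. The balancedness hypothesis enters exactly here: the exceptional divisor is $E=\PP(N_{\Gamma/\mathcal X})\simeq \Gamma\times\PP^1=\PP^1\times\PP^1$ rather than a nontrivial Hirzebruch surface. Moreover $\cO_E(E)$ has bidegree $(-a,-1)$, with the first factor the $\Gamma$-direction. The new central fiber is $\widetilde X_0\cup E$, where $\widetilde X_0\simeq X_0$ meets $E$ along a section $\Gamma'$ of the projection $E\to\Gamma$. Since $N_{\Gamma/X_0}=\cO(-2a)\subset N_{\Gamma/\mathcal X}$, this section has bidegree $(1,a)$ on $E$. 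The flat limit of $C_t$ in $\widetilde{\mathcal X}$ splits as $\widetilde C\cup Z$ with $\widetilde C\subset\widetilde X_0$ and $Z\subset E$ of some bidegree $(\alpha,\beta)$. These pieces satisfy the matching condition $\widetilde C\cdot\Gamma'=Z\cdot\Gamma'$. Pushing forward to $X_0$ and using $\sigma_*$ on $E$ gives $c=[\widetilde C]+\beta'\gamma$ for an explicit $\beta'$ determined by $(\alpha,\beta)$. Pairing the total limit class with $E$ and $\widetilde X_0$, both of which restrict trivially to a nearby fiber, gives two linear relations among $c\cdot\gamma$, $\alpha$, $\beta$ and $\widetilde C\cdot\Gamma$.

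The main obstacle is the multiplicity estimate that comes out of this bookkeeping. One must show that the multiplicity with which $\gamma$ appears in the limit is at least $-\frac{c\cdot\gamma}{a}$. Equivalently, the residual effective part $[\widetilde C]$ should be at least $c+\frac{c\cdot\gamma}{a}\gamma$ in the effective cone. The inputs I plan to use are $\alpha,\beta\ge 0$, the fact that $Z$ contains no component equal to $\Gamma'$ unless we absorb it, and the matching condition along $\Gamma'$. On $\PP^1\times\PP^1$ the cone of curves is spanned by the two rulings, and there is no negative section. The symmetric weight $a$ in both normal directions should convert $(\widetilde C\cdot\Gamma)\ge 0$ into exactly the reflection coefficient $1/a$.

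I expect to check this first in the Laface--Ugaglia case $a=2$ against their computation, before writing it for general $a$. If the direct limit argument is awkward, the fallback is to follow Petrakiev and iterate the blow-up of the section $\Gamma'$, which again has balanced normal bundle, to control higher-order tangency.
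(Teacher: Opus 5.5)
There is a genuine gap. The estimate $q\ge -c\cdot\gamma/a$ on the multiplicity of $\Gamma$ in the limit is the whole content of the proposition, and you leave it as an expectation; the mechanism you indicate also aims at the wrong inequality. Feeding in $\widetilde C\cdot\Gamma\ge 0$ gives only $c\cdot\gamma+2aq\ge0$, i.e.\ $q\ge -c\cdot\gamma/(2a)$. That bound is already available on $X_0$ alone, with no normal-bundle input. It makes effective only the projection $c+\frac{c\cdot\gamma}{2a}\gamma$ of $c$ onto $\gamma^\perp$, not the reflection. Also, your ``equivalently'' is reversed: it is $s_\gamma(c)-[\widetilde C]$ that has to be a nonnegative multiple of $\gamma$.

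The missing step is short. It uses an input you list but do not exploit: effectivity of $Z$ on $E\simeq\Gamma\times\PP^1$.
\begin{itemize}
\item Write $Z\in|\cO_E(p,q)|$, where $q=\deg(Z\to\Gamma)$ and $p=Z\cdot(\Gamma\times\{\mathrm{pt}\})$.
\item Pushing forward gives $C_0=\widetilde C+q\Gamma$, so $\widetilde C\cdot\Gamma=c\cdot\gamma+2aq$.
\item Since $\Gamma'\in|\cO_E(a,1)|$, one has $Z\cdot\Gamma'=p+aq$.
\item The matching condition therefore says $p=c\cdot\gamma+aq$.
\item $p\ge0$ because $\Gamma\times\{\mathrm{pt}\}$ is nef on $\PP^1\times\PP^1$.
\end{itemize}
Hence $q\ge -c\cdot\gamma/a$, and $s_\gamma(c)=[\widetilde C]+\bigl(q+\frac{c\cdot\gamma}{a}\bigr)\gamma$ is effective. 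No iterated blow-ups are needed.

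Balancedness enters exactly here. For $N_{\Gamma/\mathcal X}\simeq\cO(-2a)\oplus\cO$ one gets $E\simeq\F_{2a}$, $Z$ may contain the negative section, and the same bookkeeping returns only the $1/(2a)$ bound.

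Completed this way, your argument is essentially the paper's. The paper phrases it through the leading term of the equation of the relative divisor along $\Gamma$: a nonzero map $\cO_{\PP^1}(-c\cdot\gamma)\to\operatorname{Sym}^\mu N^\vee\simeq\cO_{\PP^1}(a\mu)^{\oplus(\mu+1)}$. That is the statement $Z\in|\cO_E(c\cdot\gamma+a\mu,\mu)|$, with $q=\mu$.

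One caution: the family of curves must live over $B$ itself. You can arrange this by pushing forward the universal divisor from a multisection of the relative Hilbert scheme, which gives a divisor on $\mathcal X$ with fiber class $mc$. After a base change ramified at $0$, the normal bundle of $\Gamma$ in the new total space becomes the split extension $\cO(-2a)\oplus\cO$, and balancedness is lost.
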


\begin{proof}
Let $c$ be an effective integral class on the geometric generic fiber.
After a finite extension of the base field and specialization, $c$ is represented on $X_0$ by an effective divisor $C_0$.
Let $\mu$ be the order of vanishing of the corresponding relative divisor along $\Gamma$.
The first nonzero normal term gives a nonzero homomorphism
$$
\cO_{\PP^1}(-c\cdot\gamma)\longrightarrow \operatorname{Sym}^{\mu}N_{\Gamma/\mathcal X}^{\vee}\simeq \cO_{\PP^1}(a\mu)^{\oplus(\mu+1)}.
$$
Hence
$$
-c\cdot\gamma\leq a\mu,
$$
while $C_0-\mu\Gamma$ is effective.
Therefore
$$
s_\gamma(c)=c+\frac{c\cdot\gamma}{a}\gamma=(c-\mu\gamma)+\left(\mu+\frac{c\cdot\gamma}{a}\right)\gamma
$$
is represented by an effective real divisor on $X_0$.
The reflection $s_\gamma$ is self-adjoint for the intersection form, so
$$
s_\gamma(D)\cdot c=D\cdot s_\gamma(c)\geq0.
$$
Thus $s_\gamma(D)$ is nef on the geometric generic fiber.
Nefness of a fixed marked class then holds on very general fibers, since the relative Hilbert scheme has only countably many components and the locus where a class of negative intersection becomes effective is a proper closed subset.
\end{proof}

\begin{remark}\label{rem:reflection-isometry}
The map $s_\gamma$ is the orthogonal reflection in the hyperplane $\gamma^\perp$.
Indeed, since $\gamma^2=-2a$, formula \eqref{eq:reflection} is the usual expression
$$
s_\gamma(D)=D-2\frac{D\cdot\gamma}{\gamma^2}\gamma.
$$
In particular, it preserves the intersection form and sends isotropic classes to isotropic classes.
\end{remark}

The following formulation explains how Proposition~\ref{prop:balanced-reflection} can produce irrational Seshadri constants.

\begin{corollary}[Laface--Ugaglia principle]\label{cor:LU-principle}
In the situation of Proposition~\ref{prop:balanced-reflection}, suppose that after identifying a very general fiber with the blow-up $\pi:\widetilde X\to X$ of a smooth projective surface at a point, the reflected class is a positive multiple of
$$
\pi^*L-\sqrt{L^2}\,E
$$
for an ample line bundle $L$ on $X$.
Then
$$
\eps(L;x)=\sqrt{L^2}
$$
for a very general point $x\in X$.
If $L^2$ is not a square, this Seshadri constant is irrational.
\end{corollary}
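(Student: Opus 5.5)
The proof of Corollary~\ref{cor:LU-principle} combines Proposition~\ref{prop:balanced-reflection} with the volume bound from the introduction. Fix a very general fiber $X_t$, identified with $\widetilde X=\Bl_x X$, where $x$ is a point of $X$ that may be taken very general as $t$ varies. By Proposition~\ref{prop:balanced-reflection}, the marked class $s_\gamma(D)$ is nef on $X_t$. By hypothesis it equals $\lambda(\pi^*L-\sqrt{L^2}\,E)$ with $\lambda>0$. Nefness is preserved under positive rescaling, so $\pi^*L-\sqrt{L^2}\,E$ is nef on $\Bl_xX$. By the definition of the Seshadri constant as a supremum, this gives $\eps(L;x)\geq\sqrt{L^2}$.

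For the reverse inequality we use the bound $\eps(L;x)\leq\sqrt{L^2}$ recalled in the introduction. It holds because if $\pi^*L-tE$ is nef, then its self-intersection is nonnegative, that is, $L^2-t^2\geq 0$. Combining the two inequalities gives $\eps(L;x)=\sqrt{L^2}$.

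It remains to see that this holds for a very general point $x\in X$, and not just for the points arising in some family. Two facts need to be combined. First, the conclusion of Proposition~\ref{prop:balanced-reflection} holds on very general fibers. Second, the identification of very general fibers with $\Bl_xX$ lets $x$ range over a very general subset of $X$. This is part of the hypothesis ``identifying a very general fiber with the blow-up'', and I would state it explicitly. If only one point $x$ is produced, one can instead invoke the standard fact that $\eps(L;x)$ is constant for $x$ very general and can only drop under specialization. Then $\sqrt{L^2}\geq\eps(L;x_{\mathrm{gen}})\geq\eps(L;x)=\sqrt{L^2}$.

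Finally, if $L^2$ is a positive integer that is not a perfect square, then $\sqrt{L^2}$ is irrational, and so is the Seshadri constant. The only delicate step is the genericity bookkeeping just described. The rest is formal.
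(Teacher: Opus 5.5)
Your proposal is correct and follows the paper's proof: nefness of the rescaled reflected class from Proposition~\ref{prop:balanced-reflection} gives $\eps(L;x)\geq\sqrt{L^2}$, and the volume bound gives the reverse inequality. Your extra genericity bookkeeping is a valid clarification of a step the paper leaves implicit: since $\eps(L;x)$ is maximal at very general points, a single point attaining $\sqrt{L^2}$ suffices.
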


\begin{proof}
Proposition~\ref{prop:balanced-reflection} gives nefness of the displayed class on a very general fiber, hence $\eps(L;x)\geq\sqrt{L^2}$.
The reverse inequality is the volume bound.
\end{proof}

The significance of the principle is that the difficult nefness problem may be transferred to a special fiber where a dynamical construction is available.
The normal-bundle condition then converts that special nef ray into the ray relevant to the desired Seshadri constant.

\section{Dihedral orbits and de Jonqui\`eres transformations}\label{sec:dihedral}

Fix an odd integer $n\geq5$, a primitive $n$-th root of unity $\zeta$, and a real number $u>1$.
On $S=\F_0=\PP^1\times\PP^1$, with affine coordinates $(x,z)$, consider
\begin{equation}\label{eq:group-original}
r(x,z)=(\zeta x,\zeta^{n-2}z),\\;\; s(x,z)=\left(\frac ux,\frac{u^{n-2}}z\right).
\end{equation}
These automorphisms satisfy
$$
r^n=s^2=1,\\;\; srs=r^{-1},
$$
so they generate a dihedral group $G_n$ of order $2n$.
The orbit $G_n\cdot(1,1)$ is free and consists of the points
$$
(\zeta^\ell,\zeta^{(n-2)\ell}),\\;\; (u\zeta^\ell,u^{n-2}\zeta^{(n-2)\ell}),\\;\; 0\leq\ell<n.
$$
All these points lie on the smooth rational curve
\begin{equation}\label{eq:Gamma}
\Gamma=\{z=x^{n-2}\},
\end{equation}
which has class $(n-2)H_1+H_2$.

We now construct a sequence of isomorphisms between blow-ups of $S$ at suitable unions of two cyclic orbits.
For a unit $w\in(\ZZ/n\ZZ)^*$ define
$$
r_w(x,z)=(\zeta x,\zeta^w z)
$$
and, for $c,d\in\CC^*$, put
$$
Z_w(c,d)=\langle r_w\rangle\cdot(1,1)\cup\langle r_w\rangle\cdot(c,d).
$$

\begin{lemma}[Uniform de Jonqui\`eres transformation]\label{lem:dJ}
Assume that
$$
c^n\neq1,\\;\; d\notin\{c^w,c^{w-n}\}.
$$
Then there is a birational map $\psi:S\dashrightarrow S$ which lifts to an isomorphism
$$
\widetilde\psi:\Bl_{Z_w(c,d)}S\xrightarrow{\sim}\Bl_{Z_w(c,d^*)}S,
\\;\; d^*=\frac{c^{2w-n}}d.
$$
If $H_1,H_2,E_i$ and $H_1',H_2',E_i'$ denote the natural markings on source and target, with corresponding centers labeled by the same first coordinate, then
\begin{equation}\label{eq:dJ-marking}
\widetilde\psi^*H_1'=H_1,\\;\; \widetilde\psi^*H_2'=nH_1+H_2-E,\\;\; \widetilde\psi^*E_i'=H_1-E_i,
\end{equation}
where $E=\sum_{i=1}^{2n}E_i$.
\end{lemma}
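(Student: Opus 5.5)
The plan is to write $\psi$ down as an explicit fibrewise M\"obius map over the first projection, so that $\widetilde\psi^*H_1'=H_1$ holds by construction. By Riemann--Roch, $h^0(\cO_S(n,1))=2(n+1)$, so the bidegree-$(n,1)$ forms vanishing at the $2n$ points of $Z_w(c,d)$ span a space of dimension at least $2$. I will first show it is exactly $2$ and find a basis. Such a form is $a(x)z-b(x)$ with $\deg a,\deg b\le n$, and the conditions read
$$
b(\zeta^\ell)=\zeta^{w\ell}a(\zeta^\ell),\qquad b(c\zeta^\ell)=d\,\zeta^{w\ell}a(c\zeta^\ell)\qquad (0\le\ell<n).
$$
These conditions are invariant under $r_w$, so the solution space splits into $r_w$-eigenspaces. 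This suggests the ansatz
$$
a(x)=\alpha x^{n-w}+\beta,\qquad b(x)=\gamma x^{n}+\delta,
$$
or the analogous one with exponents shifted by $w$, where we take the representative $0<w<n$. The interpolation conditions then collapse to two linear equations in $(\alpha,\beta,\gamma,\delta)$, using $x^n=1$ on the first orbit and $x^n=c^n$ on the second. I expect the hypotheses $c^n\neq1$ and $d\neq c^w,c^{w-n}$ to be exactly what makes the solution space $2$-dimensional, spanned by forms $P=a_1z-b_1$ and $Q=a_2z-b_2$. They should also make $\Delta(x)=a_1b_2-a_2b_1$, a polynomial of degree $2n$, vanish simply at exactly the $2n$ first coordinates $\zeta^\ell$ and $c\zeta^\ell$, so that $\Delta$ is a constant times $(x^n-1)(x^n-c^n)$.

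Define $\psi(x,z)=(x,P/Q)$. On a fibre $x=x_0$ with $\Delta(x_0)\neq0$ this is a M\"obius transformation, so $\psi$ is an isomorphism there. Over each of the $2n$ special fibres $\{x_0\}\times\PP^1$, the two linear forms in $z$ are proportional and share the zero $z_0$, where $(x_0,z_0)$ is the unique point of $Z_w(c,d)$ on that fibre (all first coordinates are distinct because $c^n\ne1$). The standard elementary-transformation picture then applies. Blowing up $(x_0,z_0)$, the strict transform of the fibre, of class $H_1-E_i$, is a $(-1)$-curve that $\psi$ contracts to a point $(x_0,z_0^*)$ of the target. The exceptional curve $E_i$ maps onto the strict transform of the target fibre.

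This gives the lift to an isomorphism $\Bl_{Z_w(c,d)}S\to\Bl_{Z'}S$, where $Z'$ is the set of image points, each labelled by the same first coordinate. It also gives $\widetilde\psi^*E_i'=H_1-E_i$. The formula $\widetilde\psi^*H_2'=nH_1+H_2-E$ follows because the pullback of a horizontal line $z'=t$ is the curve $P-tQ=0$, of class $(n,1)$ through all $2n$ points. As a consistency check, the marking is an isometry, e.g.\ $(nH_1+H_2-E)^2=2n-2n=0$.

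It remains to identify $Z'$ with $Z_w(c,d^*)$. Since $\psi$ commutes with $r_w$ (the forms were chosen in eigenspaces, so $P/Q$ transforms like $z$ up to the character $\zeta^{w}$), $Z'$ is a union of two $r_w$-orbits. It therefore suffices to compute the image points over $x_0=1$ and $x_0=c$. The contracted value is $z_0^*=-b_1(x_0)/a_1(x_0)$ evaluated through the proportionality of the forms, or more cleanly, the limit value of $P/Q$ along the fibre. Possibly after composing $\psi$ with a diagonal scaling of $z$, I will normalise so that the image over $x=1$ is $(1,1)$. I will then check that the image over $x=c$ is $(c,c^{2w-n}/d)$.

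This explicit computation of $d^*$, together with verifying that the two-dimensional solution space is as claimed (no extra vanishing of $\Delta$, and the base points simple), is the step I expect to be the main obstacle. What I will try first is to solve the $2\times2$ linear system for the eigen-ansatz directly and read off $d^*$ from the product relation $z_0z_0^*=\text{const}\cdot x_0^{2w-n}$, which the symmetry suggests.
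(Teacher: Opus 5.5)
Your plan is correct and is essentially the paper's proof: the paper also takes one section from each of the two three-dimensional $r_w$-weight spaces $\langle 1,x^n,x^{n-w}z\rangle$ and $\langle x^w,z,x^nz\rangle$, and computes their determinant as $-(c^w-d)(c^{n-w}d-1)(x_1^n-x_0^n)(x_1^n-c^nx_0^n)$, whose constant is nonzero precisely when $d\notin\{c^w,c^{w-n}\}$; it then sets $\psi=(x,\sigma_1/(d\sigma_0))$ with the weight-$w$ section in the numerator and reads off $d^*$ over $x=1$ and $x=c$. Fix two slips when you write it out: your displayed ansatz mixes weights (the term $\beta z$ has weight $w$ while the others have weight $0$, so the conditions force $\beta=0$ and yield only one section, and you need a separate three-term ansatz in each weight space), and the contracted value on a special fibre is the proportionality factor between $P$ and $Q$ there, not $-b_1(x_0)/a_1(x_0)=-z_0$ — with that correction your product relation $z_0z_0^*=x_0^{2w-n}$ holds exactly and gives $d^*=c^{2w-n}/d$ at once.
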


\begin{proof}
The pullback action of $r_w$ on $H^0(S,\cO_S(n,1))$ has two three-dimensional weight spaces,
$$
\langle1,x^n,x^{n-w}z\rangle,\\;\; \langle x^w,z,x^nz\rangle,
$$
and all remaining weight spaces have dimension two.
Vanishing at $(1,1)$ and $(c,d)$ determines, up to scalar, the sections
\begin{align*}
\sigma_0&=c^n-dc^{n-w}+(dc^{n-w}-1)x^n+(1-c^n)x^{n-w}z,\\
\sigma_1&=d(c^n-1)x^w+(c^w-dc^n)z+(d-c^w)x^nz.
\end{align*}
After bihomogenization, write $\sigma_i=A_i z_0+B_i z_1$.
A direct determinant calculation gives
\begin{equation}\label{eq:determinant}
A_0B_1-A_1B_0=-(c^w-d)(c^{n-w}d-1)(x_1^n-x_0^n)(x_1^n-c^nx_0^n).
\end{equation}
Under our assumptions the right-hand side has precisely $2n$ simple zeros in the first factor, and the coefficient matrix has rank one at each zero.
Since $(nH_1+H_2)^2=2n$, the base scheme of the pencil $\langle\sigma_0,\sigma_1\rangle$ is the reduced set $Z_w(c,d)$.
Thus
$$
\psi(x,z)=\left(x,\frac{\sigma_1(x,z)}{d\sigma_0(x,z)}\right)
$$
is resolved by blowing up these $2n$ points and contracts the strict transforms of the $2n$ first-ruling fibers through them.
The target is again $S$, blown up at the images of the contracted fibers.
Evaluating on $x=1$ and $x=c$ gives second coordinates $1$ and $c^{2w-n}/d$, respectively, hence the target centers form $Z_w(c,d^*)$.
The formulas \eqref{eq:dJ-marking} follow from the pencil and from the classes of the contracted curves.
\end{proof}

After applying Lemma~\ref{lem:dJ}, exchange the two factors of the target.
The parameters transform by
\begin{equation}\label{eq:param-transform}
(c,d,w)\longmapsto\left(c^{2w-n}/d,c,w^{-1}\right),
\end{equation}
where the inverse is taken modulo $n$.
Set
$$
w_0=\frac{n-1}{2}.
$$
Since $w_0^{-1}\equiv n-2\pmod n$, the weights alternate between $w_0$ and $n-2$.
Start with
\begin{equation}\label{eq:initial-triple}
(c_0,d_0,w_0)=\left(u^{n-2},u,\frac{n-1}{2}\right).
\end{equation}
Write
$$
c_j=u^{\rho_j},\\;\; d_j=u^{\tau_j}.
$$
At the two alternating steps the exponent pairs transform as
\begin{equation}\label{eq:exponent-recurrence}
(\rho,\tau)\longmapsto(-\rho-\tau,\rho),\\;\; (\rho,\tau)\longmapsto((n-4)\rho-\tau,\rho).
\end{equation}

\begin{lemma}\label{lem:nondegenerate}
For every $j\geq0$ the hypotheses of Lemma~\ref{lem:dJ} are satisfied by $(c_j,d_j,w_j)$.
\end{lemma}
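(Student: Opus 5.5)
The plan is to reduce all the hypotheses of Lemma~\ref{lem:dJ} to elementary conditions on the integer exponent pairs $(\rho_j,\tau_j)$, and then to control these pairs through their ratio. Since $u>1$ is real, the map $t\mapsto u^t$ is injective on $\RR$, and every $u^t$ is a positive real number. So for $c_j=u^{\rho_j}$ and $d_j=u^{\tau_j}$ the hypotheses become:
$$
c_j^n\neq1\iff\rho_j\neq0,\qquad d_j\neq c_j^{w_j}\iff \tau_j\neq w_j\rho_j,\qquad d_j\neq c_j^{w_j-n}\iff\tau_j\neq (w_j-n)\rho_j .
$$
Here $w_j$ is read as the integer representative $w_0=\tfrac{n-1}{2}$ or $n-2$, which is the convention used to derive \eqref{eq:exponent-recurrence}. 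Indeed, $2w_0-n=-1$ and $2(n-2)-n=n-4$, and these give exactly the two maps in \eqref{eq:exponent-recurrence}. By induction, all $\rho_j,\tau_j$ are integers, starting from $(\rho_0,\tau_0)=(n-2,1)$.

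Next I would pass to the ratio $x_j=\tau_j/\rho_j$, which is defined as long as $\rho_j\neq0$. In these terms the recurrences read as follows.

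At a step with weight $w_0$ (step A):
$$x'=\frac{\rho}{-\rho-\tau}=-\frac{1}{1+x}.$$
Here the new $\rho'$ is nonzero precisely when $x\neq-1$.

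At a step with weight $n-2$ (step B):
$$x'=\frac{1}{(n-4)-x}.$$
Here $\rho'\neq0$ precisely when $x\neq n-4$.

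The forbidden values of $x$ are therefore $\{\tfrac{n-1}{2},-\tfrac{n+1}{2},-1\}$ at an A-step and $\{n-2,-2,n-4\}$ at a B-step. Including the values $-1$ and $n-4$ guarantees that the next $\rho$ is nonzero, so the ratio stays defined along the whole orbit.

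The key step is to exhibit invariant intervals. I claim that $x\in(0,1]$ at every A-step and $x\in[-1,-\tfrac12)$ at every B-step.
\begin{itemize}
\item If $x\in(0,1]$, then $1+x\in(1,2]$, so $-1/(1+x)\in[-1,-\tfrac12)$.
\item If $x\in[-1,-\tfrac12)$, then $n-4-x\in(n-\tfrac72,\,n-3]$, which is positive, so $1/(n-4-x)\in(0,1]$.
\end{itemize}
The initial value is $x_0=1/(n-2)\in(0,1]$, with an A-step since $w_0=\tfrac{n-1}{2}$, and $\rho_0=n-2\neq0$.

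By induction on $j$, we get $\rho_j\neq0$ for all $j$, and $x_j$ lies in the appropriate interval. It remains to check that these intervals avoid the forbidden values.
\begin{itemize}
\item The interval $(0,1]$ misses $\tfrac{n-1}{2}\geq2$, the negative value $-\tfrac{n+1}{2}$, and $-1$.
\item The interval $[-1,-\tfrac12)$ misses the positive values $n-2$ and $n-4$ (using $n\geq5$), and misses $-2$.
\end{itemize}
This gives all the hypotheses of Lemma~\ref{lem:dJ} for every $j$.

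The only delicate point I expect is bookkeeping rather than mathematics. One has to confirm that the weight actually used in Lemma~\ref{lem:dJ} at step $j$ is the integer $w_0$ or $n-2$ matching the exponent $2w-n$ in \eqref{eq:exponent-recurrence}. This matters because $c^w$ for real $c$ depends on the chosen representative of $w$ modulo $n$. It also has to be checked that the factor exchange in \eqref{eq:param-transform} sends $(\rho,\tau)$ to $(\rho',\rho)$ as claimed. Once that is fixed, the argument above reduces to the two-line interval check.
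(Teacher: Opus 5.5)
Your argument is correct in substance and is essentially the paper's proof, written in terms of the ratio $x_j=\tau_j/\rho_j$. The paper shows by induction that $|\rho_j|>|\tau_j|>0$, with $\rho_j,\tau_j$ of the same sign at even steps and of opposite signs at odd steps; that is, $x_j\in(0,1)$, respectively $x_j\in(-1,0)$. It then excludes the forbidden equalities using $|w_j|>1$ and $|w_j-n|>1$, which is exactly what your interval check does. Your remark that $w_j$ must be read as the integer representative $\tfrac{n-1}{2}$ or $n-2$ matches the paper's convention.

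There is one slip you need to fix. For $x\in(0,1]$ you have $1+x\in(1,2]$, so $-1/(1+x)\in(-1,-\tfrac12]$, not $[-1,-\tfrac12)$; for instance $x=1$ maps to $-\tfrac12$. As stated, your pair of intervals is therefore not invariant. Replace the B-step interval by $(-1,-\tfrac12]$. Then $n-4-x\in[n-\tfrac72,n-3)$, so $1/(n-4-x)\in(0,1]$ because $n-\tfrac72\ge\tfrac32$. The forbidden B-step values $n-2$, $-2$ and $n-4$ still lie outside $(-1,-\tfrac12]$, so the rest of your argument goes through unchanged.
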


\begin{proof}
Starting from $(\rho_0,\tau_0)=(n-2,1)$, induction using \eqref{eq:exponent-recurrence} shows that
$$
|\rho_j|>|\tau_j|>0
$$
for all $j$.
At even steps the two exponents have the same sign, and at odd steps they have opposite signs.
Indeed, the first transformation sends a same-sign pair with $|\rho|>|\tau|$ to an opposite-sign pair with the same strict inequality, while the second transformation sends an opposite-sign pair to a same-sign pair because $n-4\geq1$, and for $n\geq5$ the first component has strictly larger absolute value than the second.
Thus $c_j^n\neq1$.
Moreover the forbidden equalities $\tau_j=w_j\rho_j$ and $\tau_j=(w_j-n)\rho_j$ are impossible because $|\tau_j/\rho_j|<1$, whereas
$$
|w_j|>1,\\;\; |w_j-n|>1
$$
for $w_j\in\{(n-1)/2,n-2\}$.
\end{proof}

Let $P_j$ be the factor exchange of $Z_{w_j}(c_j,d_j)$ and let
$$
X_j=\Bl_{P_j}S.
$$
The preceding construction gives isomorphisms $f_j:X_j\to X_{j+1}$.
The initial configuration is
$$
P_0=G_n\cdot(1,1).
$$
On the symmetric marked subspace $\Span(H_1,H_2,E)$, every pullback $f_j^*$ is represented by
\begin{equation}\label{eq:Tn}
T_n=\begin{pmatrix}
0&1&0\\
1&n&2n\\
0&-1&-1
\end{pmatrix}.
\end{equation}

\section{The limiting nef ray}\label{sec:nef-ray}

Put
$$
\Delta=\sqrt{n(n-4)},\\;\; \alpha=\frac{n-\Delta}{2},\\;\; \beta=\frac{n+\Delta}{2},
$$
and consider the class
\begin{equation}\label{eq:Rn}
R_n=\alpha H_1+\beta H_2-E
\end{equation}
on $X_0$.
Notice that
$$
\alpha\beta=n,
$$
so $R_n^2=0$.

\begin{proposition}\label{prop:R-nef}
The class $R_n$ is nef on $X_0=\Bl_{G_n\cdot(1,1)}S$.
\end{proposition}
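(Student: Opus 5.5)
The plan is to obtain $R_n$ as a limit of nef classes transported back to $X_0$ along the isomorphisms $f_j:X_j\to X_{j+1}$ of Section~\ref{sec:dihedral}. Since nefness is preserved under pullback by isomorphisms, and the nef cone of $X_0$ is closed, it suffices to exhibit $R_n$ as a limit of positive rescalings of such pullbacks.

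\textbf{Step 1: $R_n$ is an eigenvector of $T_n$.} Write classes in $\Span(H_1,H_2,E)$ as coordinate vectors $(a,b,e)$ for $aH_1+bH_2+eE$, and let $T_n$ from \eqref{eq:Tn} act on these vectors. (Whether $T_n$ acts on columns or rows must be checked against \eqref{eq:dJ-marking} and the factor exchange; I expect the column convention.) For $(\alpha,\beta,-1)$ we get
$$
T_n(\alpha,\beta,-1)^{\mathsf T}=(\beta,\ \alpha+n\beta-2n,\ 1-\beta)^{\mathsf T}.
$$
Using $\alpha+\beta=n$, $\alpha\beta=n$ and $\beta^2=n\beta-n$, this equals $(\beta-1)(\alpha,\beta,-1)^{\mathsf T}$. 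So $R_n$ is an eigenvector with eigenvalue $\lambda=\beta-1$.

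\textbf{Step 2: $\lambda$ is the dominant eigenvalue.} We have $\det T_n=1$ and $(\beta-1)(\alpha-1)=\alpha\beta-(\alpha+\beta)+1=1$. Hence the spectrum of $T_n$ is $\{\beta-1,\ \alpha-1,\ 1\}$. For $n\ge5$ one has $\beta-1>1>\alpha-1>0$, since $1<\alpha<2$. So $\beta-1$ is a simple, strictly dominant eigenvalue, and its eigenline is spanned by $R_n$.

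\textbf{Step 3: iterate a nef class.} Take the nef class $H_1$ (a ruling pulled back from $S$) on $X_m$. Since $f_{m-1}\circ\cdots\circ f_0:X_0\to X_m$ is an isomorphism, the class
$$
N_m=(f_{m-1}\circ\cdots\circ f_0)^*H_1=T_n^m\,(1,0,0)^{\mathsf T}
$$
is nef on $X_0$. All of these classes lie in $\Span(H_1,H_2,E)$, because every $f_j^*$ preserves that symmetric subspace. Decompose $(1,0,0)$ in the eigenbasis. Then $\lambda^{-m}N_m$ converges to $c\,R_n$, where $c$ is the $R_n$-component of $(1,0,0)$. The limit is nef, so $cR_n$ is nef.

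\textbf{Main obstacle: showing $c>0$.} The delicate point is to check that $c\neq0$ and has the right sign, so that the limit is a positive multiple of $R_n$ and not of $-R_n$ or zero. I would compute $c$ using the left eigenvector of $T_n$ for $\lambda$; equivalently, because $T_n$ is an isometry of the intersection form, $c$ is proportional to the pairing of $H_1$ with the other isotropic eigenvector $\alpha' H_1+\beta' H_2-E$ associated with $\alpha-1$. Explicitly, $(\alpha',\beta')=(\beta,\alpha)$, and the pairing is $\alpha\neq0$.

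The sign then follows from positivity. The classes $N_m$ are nef and nonzero, so each has positive intersection with an ample class $A$. Also $R_n\cdot A>0$, because the $H_2$-coefficient $\beta$ dominates. Hence $c>0$.

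If the eigenvalue convention turns out transposed, the same argument runs with $(f_0^{-1}\circ\cdots)$ and the inverse matrix. In that case the dominant eigenvector of $T_n^{-1}$ is the other isotropic ray, which would then be handled symmetrically, with $R_n$ obtained from whichever direction of iteration realizes $\beta-1$.
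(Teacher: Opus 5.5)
Your proposal is correct and is essentially the paper's argument. The paper iterates $H_2$ rather than $H_1$, which is the same sequence shifted by one step since $T_nH_1=H_2$, and it reads off the positive $R_n$-coefficient $\beta/(n(n-4))$ from an explicit eigen-decomposition, whereas you obtain $\alpha/(n(n-4))$ by pairing with $\overline R_n=\beta H_1+\alpha H_2-E$. Two small points: your column convention is the right one, so the closing hedge is unnecessary, and in Step 2 the identity $\det T_n=1$ alone does not determine the spectrum; you also need that $\overline R_n$ is an eigenvector for $\alpha-1$, or that the trace is $n-1$.
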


\begin{proof}
The characteristic polynomial of $T_n$ is
$$
(t-1)\bigl(t^2-(n-2)t+1\bigr).
$$
Its dominant eigenvalue is
$$
\lambda=\frac{n-2+\Delta}{2}>1.
$$
Set
$$
\overline R_n=\beta H_1+\alpha H_2-E,\\;\; K=-2H_1-2H_2+E.
$$
A direct calculation gives
$$
T_n(R_n)=\lambda R_n,\\;\; T_n(\overline R_n)=\lambda^{-1}\overline R_n,\\;\; T_n(K)=K.
$$
Moreover
\begin{equation}\label{eq:H2-decomposition}
H_2=\frac{\beta}{n(n-4)}R_n+\frac{\alpha}{n(n-4)}\overline R_n+\frac1{n-4}K.
\end{equation}
For every $j$, the class $T_n^j(H_2)$ is the pullback of a ruling class under a composition of the isomorphisms $f_i$, hence is nef.
Multiplying \eqref{eq:H2-decomposition} by $\lambda^{-j}$ after applying $T_n^j$ shows that a positive multiple of $R_n$ is the limit of nef classes.
Since the nef cone is closed, $R_n$ is nef.
\end{proof}

The strict transform of the curve \eqref{eq:Gamma} on $X_0$ has class
\begin{equation}\label{eq:gamma}
\gamma=(n-2)H_1+H_2-E.
\end{equation}
Its self-intersection is
$$
\gamma^2=2(n-2)-2n=-4.
$$
The reflection in this $(-4)$-class is therefore
$$
s_\gamma(D)=D+\frac{D\cdot\gamma}{2}\gamma.
$$
The next identity is the numerical heart of the construction.

\begin{lemma}\label{lem:reflected-ray}
The reflected class $s_\gamma(R_n)$ is a positive multiple of
\begin{equation}\label{eq:Seshadri-ray}
(n-4)H_1+H_2-\sqrt{\frac{n-4}{n}}\,E.
\end{equation}
\end{lemma}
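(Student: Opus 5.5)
Since $s_\gamma(R_n)=R_n+\tfrac{R_n\cdot\gamma}{2}\gamma$ is explicit, the plan is a direct computation in $\Span(H_1,H_2,E)$ using $H_1^2=H_2^2=0$, $H_1\cdot H_2=1$, $E^2=-2n$ and $H_i\cdot E=0$. The outcome is then compared with \eqref{eq:Seshadri-ray} by checking that the coefficient ratios agree.

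First compute
$$
R_n\cdot\gamma=\alpha\cdot 1+\beta(n-2)-2n.
$$
With $\alpha=(n-\Delta)/2$ and $\beta=(n+\Delta)/2$ this equals
$$
\frac{n-\Delta+(n-2)(n+\Delta)}{2}-2n=\frac{n^2-4n+(n-3)\Delta}{2}=\frac{\Delta^2+(n-3)\Delta}{2},
$$
using $\Delta^2=n(n-4)$. Set $m=R_n\cdot\gamma/2=\Delta(\Delta+n-3)/4>0$. Then
$$
s_\gamma(R_n)=\bigl(\alpha+m(n-2)\bigr)H_1+(\beta+m)H_2-(1+m)E.
$$

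Second, verify that this is proportional to $(n-4)H_1+H_2-\sqrt{(n-4)/n}\,E$. Two identities are needed:
$$
\alpha+m(n-2)=(n-4)(\beta+m),\qquad 1+m=\sqrt{\tfrac{n-4}{n}}\,(\beta+m).
$$
The first is equivalent to $\alpha-(n-4)\beta=-m$. The left side is $\bigl(n-\Delta-(n-4)(n+\Delta)\bigr)/2=\bigl(-n^2+5n-(n-3)\Delta\bigr)/2$. This should be compared with $-m=-(n^2-4n+(n-3)\Delta)/4$, substituting $\Delta^2$ throughout.

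As a fallback I would avoid coefficient matching and use structure instead. By Remark~\ref{rem:reflection-isometry}, $s_\gamma(R_n)$ is isotropic, and it lies in the symmetric span. An isotropic class $xH_1+yH_2-zE$ satisfies $2xy=2nz^2$. So once the $H_1\!:\!H_2$ ratio is shown to be $(n-4):1$, the $E$-coefficient is forced to be $\sqrt{(n-4)/n}$ times the $H_2$-coefficient, up to sign. This reduces the second identity to a sign check, and positivity follows from $1+m>0$ and $\beta+m>0$.

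The main obstacle is only arithmetic: the first identity must come out exactly, and if my expression for $R_n\cdot\gamma$ or $m$ is off by a factor the ratio will fail. I would double-check by testing $n=5$, where $\Delta=\sqrt5$, against the known Laface--Ugaglia ray $H_1+H_2-\tfrac1{\sqrt5}E$. If the ratio does not match, I would recheck the sign convention in $s_\gamma$, since $\gamma^2=-4$ gives $+\tfrac{D\cdot\gamma}{2}\gamma$. Finally, positivity of the multiple follows because $\beta+m>0$.
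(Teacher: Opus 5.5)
Your route is the paper's (direct substitution into $s_\gamma$), but the computation as written contains two slips. With your values the comparison you set up is false, so the argument does not close.

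First, $n-\Delta+(n-2)(n+\Delta)=n^2-n+(n-3)\Delta$. Subtracting $2n=4n/2$ therefore gives
\[
R_n\cdot\gamma=\frac{n^2-5n+(n-3)\Delta}{2},\qquad m=\frac{n^2-5n+(n-3)\Delta}{4},
\]
not $\frac{n^2-4n+(n-3)\Delta}{2}$. The neat factorization $\Delta(\Delta+n-3)/4$ is an artifact of this error. Your proposed $n=5$ test would catch it: the true value is $R_5\cdot\gamma=\sqrt5$, whereas your formula gives $(5+2\sqrt5)/2$.

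Second, $\alpha+m(n-2)=(n-4)(\beta+m)$ is equivalent to $\alpha-(n-4)\beta=-2m$, not $-m$. With your $m$ and your reduction, the $\Delta$-coefficients of the two sides are $-(n-3)/2$ and $-(n-3)/4$. These differ, and $\Delta$ is irrational, so the first identity would appear to fail.

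After correcting both slips, the proof goes through and is cleaner than you anticipated. Since $2m=R_n\cdot\gamma=\alpha+(n-2)\beta-2n$, the identity $\alpha-(n-4)\beta=-2m$ is literally $\alpha+\beta=n$, with no use of $\Delta^2$.

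Your isotropy fallback is then valid. It gives the $E$-coefficient from $\alpha\beta=n$ (so $R_n^2=0$) together with the fact that $s_\gamma$ is an isometry. Alternatively, one checks directly that
\[
1+m=\frac{(n-1)(n-4)+(n-3)\Delta}{4}=\frac{\Delta}{n}(\beta+m),
\]
using $\Delta^2=n(n-4)$.

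The scalar is $\beta+m=\frac{n(n-3)+(n-1)\Delta}{4}$, which is exactly the paper's $c_n$. It is positive, as is $1+m$, because $m>0$ for $n\geq5$.
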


\begin{proof}
Substitute \eqref{eq:Rn} and \eqref{eq:gamma} into the reflection formula.
Using $\Delta^2=n(n-4)$ gives
$$
s_\gamma(R_n)=c_n\left((n-4)H_1+H_2-\frac{\Delta}{n}E\right),
$$
where
$$
c_n=\frac{n(n-3)+(n-1)\Delta}{4}>0.
$$
Since $\Delta/n=\sqrt{(n-4)/n}$, this is \eqref{eq:Seshadri-ray} up to the positive scalar $c_n$.
\end{proof}

\section{The balanced normal bundle and the multipoint theorem}\label{sec:reflection}

Keep the group $G_n$ fixed and move $(1,1)$ transversally by
$$
p_t=(1+t,1).
$$
For $t$ in a small disc $B$ around $0$, the orbit $G_n\cdot p_t$ consists of $2n$ pairwise distinct points.
Blow up the corresponding labeled sections of $S\times B$ and denote the resulting smooth threefold by
$$
\mathcal X\longrightarrow B.
$$
Let $\Gamma_0$ be the strict transform in the special fiber of the curve $\Gamma$ from \eqref{eq:Gamma}.

\begin{lemma}\label{lem:normal-bundle}
One has
$$
N_{\Gamma_0/\mathcal X}\simeq \cO_{\PP^1}(-2)\oplus\cO_{\PP^1}(-2).
$$
\end{lemma}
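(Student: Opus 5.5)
We must show $N_{\Gamma_0/\mathcal X}\simeq\cO(-2)^{\oplus2}$. We have the exact sequence
$$
0\to N_{\Gamma_0/X_0}\to N_{\Gamma_0/\mathcal X}\to N_{X_0/\mathcal X}|_{\Gamma_0}\to0,
$$
in which $N_{\Gamma_0/X_0}=\cO_{\PP^1}(-4)$ because $\gamma^2=-4$, and $N_{X_0/\mathcal X}|_{\Gamma_0}\simeq\cO$ because $X_0$ is a fiber. Hence $N_{\Gamma_0/\mathcal X}$ has degree $-4$ and is isomorphic to $\cO(a)\oplus\cO(b)$ with $a+b=-4$ and $a\ge b$. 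The sequence forces $a\le 0$, since otherwise $\cO(a)$ would map injectively to $\cO$. So the possibilities are $(0,-4)$, $(-1,-3)$ and $(-2,-2)$. It therefore suffices to show $h^0(N_{\Gamma_0/\mathcal X}(1))=0$, which rules out $a\ge -1$.

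Twisting by $\cO(1)$ gives
$$
0\to H^0(\cO(-3))=0\to H^0(N(1))\to H^0(\cO(1))\xrightarrow{\ \delta\ } H^1(\cO(-3))\cong\CC^2 .
$$
So the claim is equivalent to injectivity of $\delta$, which is cup product with the extension class $e\in H^1(\Gamma_0, N_{\Gamma_0/X_0})=\operatorname{Ext}^1(\cO,\cO(-4))$. This class is the obstruction to lifting $\Gamma_0$ to first order along the family. Concretely, it is the image of $\partial/\partial t$ under the connecting map $H^0(N_{X_0/\mathcal X}|_{\Gamma_0})\to H^1(N_{\Gamma_0/X_0})$.

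To compute $e$, I would use the blow-up description. Deformations of $\Gamma\subset S$ are governed by $N_{\Gamma/S}=\cO(2n-4)$, since $\Gamma$ has class $(n-2)H_1+H_2$ and $\Gamma^2=2(n-2)$. Passing to the strict transform imposes the condition of passing through the $2n$ points, so
$$
N_{\Gamma_0/X_0}=N_{\Gamma/S}\Bigl(-\textstyle\sum q_i\Bigr)=\cO(-4).
$$
The first-order motion $\dot q_i$ of each center under $t\mapsto G_n\cdot p_t$ gives normal vectors $\nu_i\in N_{\Gamma/S}|_{q_i}$. The class $e$ is the image of $(\nu_i)$ under
$$
\bigoplus_i N_{\Gamma/S}|_{q_i}\to H^1\bigl(N_{\Gamma/S}(-\textstyle\sum q_i)\bigr),
$$
with $t$ scaled in the obvious way. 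By Serre duality, $H^1(\cO(-4))^\vee=H^0(\cO(2))$, and $e$ becomes the functional
$$
\phi\mapsto\sum_i \operatorname{Res}_{q_i}(\phi\cdot\nu_i),
$$
that is, a weighted evaluation of quadratic forms on $\Gamma\simeq\PP^1$ at the $2n$ points $q_i$.

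Injectivity of $\delta$ says that for every nonzero $\ell\in H^0(\cO(1))$ the functional $\phi\mapsto e(\ell\phi)$ is nonzero on $H^0(\cO(1))$. Equivalently, the bilinear pairing $(\ell,m)\mapsto e(\ell m)$ on $H^0(\cO(1))$ must be nondegenerate, i.e.\ the associated $2\times2$ matrix is invertible.

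The main work is this computation. I would parametrize $\Gamma$ by $x$, so the points are $q=\zeta^\ell$ and $q=u\zeta^\ell$. I would then compute the normal displacement $\nu$ of $g\cdot p_t$ from the group action. Since $r$ and $s$ preserve $\Gamma$, the displacement at $g\cdot(1,1)$ is $dg$ applied to the displacement $(1,0)$ at $(1,1)$. Measured via the normal function $z-x^{n-2}$, this gives $\nu=-(n-2)\,x\cdot(\text{factor})$ transported by $g$, and the contributions become equivariant.

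The entries of the $2\times 2$ matrix are sums $\sum_i w_i q_i^k$ for $k=0,1,2$. The roots-of-unity sums kill most terms by the $\langle r\rangle$-symmetry, and the $s$-symmetry pairs the two cyclic orbits. The expected outcome is a diagonal or antidiagonal matrix whose entries are nonzero multiples of powers of $u$ and $n$. That would give invertibility, hence $a=b=-2$.

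The main obstacle is getting the residue weights exactly right, including the dependence on the chosen trivializations and the twist by $\sum q_i$. I would first sanity-check the whole computation on $n=5$, where the answer is known from Laface--Ugaglia.
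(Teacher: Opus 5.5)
Your top-level reduction matches the paper's. Since $\deg N=-4$, everything comes down to $H^0(N(1))=0$, after which Grothendieck splitting gives $\cO(-2)^{\oplus2}$. The difference is in how the vanishing is checked.

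The paper works directly with sections. A section of $N(1)$ is a pair of polynomials $(f,g)$ with $\deg f\le 2n-3$ and $\deg g\le1$, subject to the elementary-transform conditions $f(q_i)=\nu_i g(q_i)$ at the $2n$ points. Equivalently, $f+(n-2)x^{n-2}g=(x^n-1)a$ and $f-(n-2)x^{n-2}g=(x^n-u^n)b$. A degree comparison then forces $a=b$, then $g=a=0$, then $f=0$.

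You pass to the dual side via the connecting map, the extension class $e\in H^1(\cO(-4))$, and Serre duality. That framework is sound. As written, however, the proposal stops at the only non-formal step: invertibility of the $2\times2$ matrix is only an ``expected outcome''. Also, the formula $\sum_i\operatorname{Res}_{q_i}(\phi\,\nu_i)$ is not meaningful as stated, since nothing there has a pole. So as it stands this is a plan, not a proof.

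The computation does go through. Take the coordinate $x$ on $\Gamma$ and trivialize $N_{\Gamma/S}$ by $\partial_v$, where $v=z-x^{n-2}$. The normal velocities are $\nu_i=-(n-2)q_i^{n-2}$ on the orbit $q_i^n=1$ and $\nu_i=(n-2)q_i^{n-2}$ on the orbit $q_i^n=u^n$. Put $\Pi=(x^n-1)(x^n-u^n)$. By the residue theorem, the annihilator of the image of $H^0(\cO(2n-4))$ in $\CC^{2n}$ is $\{(R(q_i)/\Pi'(q_i))_i:\deg R\le2\}$. Hence $e(R)=\sum_i\nu_iR(q_i)/\Pi'(q_i)$, and in the same way $\langle\delta(\ell),m\rangle=e(\ell m)$. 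On both orbits $\nu_i/\Pi'(q_i)=\frac{n-2}{n(u^n-1)}\,q_i^{-1}$, so
\[
e(1)=e(x^2)=0,\qquad e(x)=\frac{2(n-2)}{u^n-1}\neq0.
\]
The Gram matrix in the basis $1,x$ is therefore antidiagonal and invertible, which is exactly what you needed.

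Compared with yours, the paper's argument is shorter and needs no bookkeeping of trivializations or dual weights. Your route gives a more structural picture: $e$ is the first-order obstruction to moving $\Gamma$ along with the $2n$ points. The three splitting types $(0,-4)$, $(-1,-3)$, $(-2,-2)$ correspond exactly to rank $0,1,2$ of the symmetric form $(\ell,m)\mapsto e(\ell m)$. The cyclic symmetry is what kills $e(1)$ and $e(x^2)$.
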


\begin{proof}
Before blowing up the moving sections, the normal bundle of $\Gamma\simeq\PP^1$ in $S\times B$ is
$$
\cO_{\PP^1}(2n-4)\oplus\cO_{\PP^1},
$$
because $\Gamma^2=2(n-2)$ in $S$ and the parameter direction contributes the trivial summand.
Each of the $2n$ blow-ups performs an elementary transform and lowers the total degree by one.
Hence, for the resulting rank-two bundle $N=N_{\Gamma_0/\mathcal X}$,
$$
\deg N=(2n-4)-2n=-4.
$$
It remains to prove $H^0(N(1))=0$.

Use the affine normal coordinate
$$
v=z-x^{n-2}
$$
near $\Gamma$.
For the first $n$ moving sections one has $x^n=1$ at $t=0$ and the normal slope is
$$
v'=-(n-2)x^{n-2},
$$
while for the other $n$ sections one has $x^n=u^n$ and
$$
v'=(n-2)x^{n-2}.
$$
A section of $N(1)$ is represented on the affine chart by a pair of polynomials $(f,g)$ with
$$
\deg f\leq2n-3,\\;\; \deg g\leq1,
$$
satisfying the elementary-transform conditions at the $2n$ orbit points.
Equivalently, there exist polynomials $a,b$ of degree at most $n-3$ such that
\begin{align}
 f+(n-2)x^{n-2}g&=(x^n-1)a,\label{eq:normal1}\\
 f-(n-2)x^{n-2}g&=(x^n-u^n)b.\label{eq:normal2}
\end{align}
Subtracting \eqref{eq:normal2} from \eqref{eq:normal1}, the coefficients in degrees $n,\ldots,2n-3$ force $a=b$.
We then obtain
$$
2(n-2)x^{n-2}g=(u^n-1)a.
$$
The left-hand side is supported only in degrees $n-2$ and $n-1$, whereas the right-hand side has degree at most $n-3$.
Thus $g=a=b=0$, and then $f=0$.
Hence $H^0(N(1))=0$.

By Grothendieck splitting, write $N\simeq\cO(r)\oplus\cO(s)$.
The vanishing of $H^0(N(1))$ gives $r,s\leq-2$, while $r+s=-4$.
Therefore $r=s=-2$.
\end{proof}

We can now prove the multipoint theorem.

\begin{proof}[Proof of Theorem~\ref{thm:F0-main}]
On the special fiber, Proposition~\ref{prop:R-nef} gives the nef isotropic class $R_n$.
The curve $\Gamma_0$ has class $\gamma$ with $\gamma^2=-4$, and Lemma~\ref{lem:normal-bundle} gives
$$
N_{\Gamma_0/\mathcal X}\simeq\cO_{\PP^1}(-2)^{\oplus2}.
$$
Proposition~\ref{prop:balanced-reflection} therefore shows that $s_\gamma(R_n)$ is nef on very general fibers of the one-parameter family.
By Lemma~\ref{lem:reflected-ray}, a positive multiple of
$$
(n-4)H_1+H_2-\sqrt{\frac{n-4}{n}}\,E
$$
is nef on such fibers.

The same marked class is then nef on a very general free $G_n$-orbit.
Indeed, in the irreducible parameter space of free orbits, every integral class having negative intersection with the marked class has an effectivity locus which is closed and does not contain the orbit already obtained above.
There are only countably many integral classes.
The identical argument in the irreducible parameter space of ordered $2n$-tuples of distinct points shows nefness for $2n$ very general points of $S$.

Since
$$
\cO_S(n-4,1)^2=2(n-4),
$$
the volume bound is
$$
\sqrt{\frac{2(n-4)}{2n}}=\sqrt{\frac{n-4}{n}}.
$$
The constructed nef class gives the reverse inequality, so equality holds.
\end{proof}

\begin{remark}\label{rem:irrational-multipoint}
For $n>4$ the number $\sqrt{(n-4)/n}$ is irrational.
Indeed, if $n(n-4)=m^2$, then
$$
(n-2-m)(n-2+m)=4.
$$
For odd $n$ both factors are positive even integers, forcing both to equal $2$, which is impossible for $n>4$.
\end{remark}

\section{The dihedral quotient}\label{sec:quotient}

We now convert the orbit equality into a one-point equality.
It is convenient to conjugate the action \eqref{eq:group-original} by
$$
x'=x/u^{1/2},\\;\; z'=u^{(n-2)/2}/z.
$$
Dropping the primes, the generators become
\begin{equation}\label{eq:normalized-action}
r(x,z)=(\zeta x,\zeta^2z),\\;\; s(x,z)=(1/x,1/z).
\end{equation}
Let
$$
q:S\longrightarrow Y=S/G_n
$$
be the quotient.

The invariant functions
$$
x^n+x^{-n},\\;\; z^n+z^{-n}
$$
induce morphisms $\phi_1,\phi_2:Y\to\PP^1$.
Define the Cartier divisor
\begin{equation}\label{eq:A-on-Y}
A=(n-4)\phi_1^*\cO_{\PP^1}(1)+\phi_2^*\cO_{\PP^1}(1).
\end{equation}
Then
\begin{equation}\label{eq:A-pullback}
q^*A=\cO_S(2n(n-4),2n)=\cO_S(n-4,1)^{\otimes2n}.
\end{equation}
In particular $A$ is ample because its finite pullback is ample, and
$$
A^2=\frac{(q^*A)^2}{2n}=4n(n-4).
$$

\begin{corollary}\label{cor:quotient-epsilon}
For a very general smooth point $y\in Y$ one has
$$
\eps(A;y)=2\sqrt{n(n-4)}.
$$
\end{corollary}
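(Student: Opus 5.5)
The plan is to pull the Seshadri problem on $Y$ back to $S$ along the quotient map $q$ and use Theorem~\ref{thm:F0-main} for the orbit. The upper bound is just the volume bound, $\eps(A;y)\le\sqrt{A^2}=2\sqrt{n(n-4)}$. So all the work is in the lower bound $\eps(A;y)\geq 2\sqrt{n(n-4)}$.

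For the lower bound, take a very general point $y\in Y$ and let $q^{-1}(y)=G_n\cdot p$ be its fibre. Since $y$ is very general, this is a very general free orbit of $G_n$ in the normalized coordinates \eqref{eq:normalized-action}. These coordinates differ from \eqref{eq:group-original} only by a fixed automorphism of $S$, so "very general free orbit" has the same meaning in both. Theorem~\ref{thm:F0-main} then gives that
$$
\cO_S(n-4,1)-\sqrt{\tfrac{n-4}{n}}\,E
$$
is nef on $\Bl_{G_n\cdot p}S$. Here I should check that the conjugation $z'=u^{(n-2)/2}/z$ preserves the class $\cO_S(n-4,1)$; it does, because it acts on each factor separately and so preserves both rulings. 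Multiplying by $2n$ and using \eqref{eq:A-pullback}, the class $q^*A-2n\sqrt{\frac{n-4}{n}}\,E=q^*A-2\sqrt{n(n-4)}\,E$ is nef.

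Next I transfer nefness down to $Y$ via the lifted map. The morphism $q$ is étale over $y$ because the orbit is free and $y$ is a smooth point. Hence it lifts to a finite morphism $\tilde q:\Bl_{G_n p}S\to\Bl_yY$ with $\tilde q^*E_y=E=\sum E_{p_i}$. Write $\eps_0=2\sqrt{n(n-4)}$. Pulling back gives
$$
\tilde q^*(\pi^*A-\eps_0E_y)=q^*A-\eps_0E,
$$
and this class is nef by the previous step. For a finite surjective morphism, a class is nef if and only if its pullback is nef: for an irreducible curve $C\subset\Bl_yY$, choose a curve $C'$ mapping onto $C$, and then $\tilde q^*D\cdot C'=\deg(C'/C)\,D\cdot C$. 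Therefore $\pi^*A-\eps_0E_y$ is nef, which gives $\eps(A;y)\ge\eps_0$. Combined with the volume bound, this proves equality.

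I expect the main obstacle to be the genericity bookkeeping rather than the geometry. The points that need care are that $y$ very general in $Y$ corresponds to $q^{-1}(y)$ being a very general orbit in the sense of Theorem~\ref{thm:F0-main}, that the coordinate change is harmless, and that the fibre avoids the finitely many singular points and branch curves of $q$. Each of these is the complement of countably many proper closed subsets under a surjective finite map, so each should be routine.
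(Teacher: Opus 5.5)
Your proposal is correct and follows the paper's proof: you pull back along the finite lift $\Bl_{G_n\cdot p}S\to\Bl_yY$, use $q^*A=\cO_S(n-4,1)^{\otimes 2n}$ with the orbit version of Theorem~\ref{thm:F0-main}, and test nefness after finite pullback. The only cosmetic difference is that you get the upper bound from the volume bound $\sqrt{A^2}$, whereas the paper gets both inequalities at once because nefness of $\pi^*A-tE_y$ is equivalent to nefness of its pullback.
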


\begin{proof}
Choose $p\in S$ with trivial stabilizer and put $y=q(p)$.
The blow-up of $S$ along the free orbit $G_n\cdot p$ maps finitely to $\Bl_yY$, and the exceptional divisor over $y$ pulls back to the sum of the $2n$ exceptional curves upstairs.
Nefness can be tested after finite pullback.
Using \eqref{eq:A-pullback} and Theorem~\ref{thm:F0-main}, we obtain
$$
\eps(A;y)=2n\sqrt{\frac{n-4}{n}}=2\sqrt{n(n-4)}.
$$
The assertion holds at a very general smooth point because the quotient of the very general free-orbit locus is very general in $Y$.
\end{proof}

We next analyze the singularities and the minimal resolution of $Y$.
Write
$$
n=2k+1.
$$
Every nontrivial rotation fixes the four corners of $S$.
They form two $G_n$-orbits, represented by $(0,0)$ and $(0,\infty)$.
The tangent weights are $(1,2)$ and $(1,n-2)$ modulo $n$, respectively.
Thus the corresponding quotient singularities are
$$
\frac1n(1,2),\\;\; \frac1n(1,n-2).
$$
Every reflection $r^js$ has four torus fixed points, and these $4n$ points form four $G_n$-orbits.
Their stabilizers have order two and act as $(-1,-1)$ on the tangent space, so their images are four $A_1$ singularities.
There are no further points with nontrivial stabilizer.
Hence $Y$ has precisely six singular points.

Let
$$
\nu:Z\longrightarrow Y
$$
be the minimal resolution.
The morphism $\phi_1$ induces a ruling
$$
f:Z\longrightarrow\PP^1.
$$
Its reducible fibers lie over $2,-2,\infty$.
Over each of the finite values $\pm2$, the fiber is the chain
$$
U_j\ --\ V_j\ --\ W_j,\\;\; j=1,2,
$$
with self-intersections and multiplicities
$$
(-2,-1,-2),\\;\; (1,2,1).
$$

At infinity, the Hirzebruch--Jung continued fractions are
\begin{equation}\label{eq:HJ}
\frac n2=[k+1,2],\\;\; \frac nk=[3,\underbrace{2,\ldots,2}_{k-1}],
\end{equation}
where the second expression is used because $k$ is the inverse of $n-2$ modulo $n$.
Consequently the fiber over infinity is the chain
\begin{equation}\label{eq:infinity-chain}
G_1\ --\ G_2\ --\ T\ --\ H_1\ --\ H_2\ --\cdots--\ H_k
\end{equation}
with self-intersections
\begin{equation}\label{eq:infinity-self}
-(k+1),\ -2,\ -1,\ -3,\ \underbrace{-2,\ldots,-2}_{k-1}
\end{equation}
and multiplicities
\begin{equation}\label{eq:infinity-mult}
1,\ k+1,\ n,\ k,\ k-1,\ldots,1.
\end{equation}
The multiplicities follow from the equations saying that the total fiber has intersection zero with every component, and these equations also give $T^2=-1$.

There is a distinguished section of $f$.
Let
$$
B=\{z=x^2\}\subset S.
$$
The curve $B$ is $G_n$-invariant for the normalized action \eqref{eq:normalized-action}, and the quotient $B/G_n$ maps isomorphically to the base of $f$.
Let $C\subset Z$ be its strict transform.

\begin{lemma}\label{lem:Cminusone}
The curve $C$ is a section of $f$ with
$$
C^2=-1.
$$
It meets $U_1$ and $U_2$ in the two finite reducible fibers and meets $G_1$ in the fiber over infinity.
\end{lemma}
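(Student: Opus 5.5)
The plan is to get the section property from the restricted dihedral action on $B\simeq\PP^1$, and the self-intersection from the $\mathbb Q$-divisor $q(B)$ on $Y$ corrected at the singular points it passes through.

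\emph{Section property.} For the normalized action \eqref{eq:normalized-action}, parametrize $B=\{z=x^2\}\simeq\PP^1$ by $x$. Both generators preserve $B$: $r$ acts by $x\mapsto\zeta x$ and $s$ by $x\mapsto 1/x$. Thus $G_n$ acts faithfully on $B$ as the standard dihedral group of order $2n$. Its invariant field is generated by $x^n+x^{-n}$, which is exactly $\phi_1|_B$, so $B/G_n\to\PP^1$ is an isomorphism. Since $B$ is not pointwise fixed by any nontrivial element, $q|_B$ is generically unramified. Hence $q^*q(B)=B$ as divisors and $C=\nu^{-1}_*q(B)$ is a section of $f$. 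A section must meet each fiber in a component of multiplicity one. Consistently, the only multiplicity-one components are $U_j,W_j$ over $\pm2$ and $G_1,H_k$ over $\infty$ by \eqref{eq:infinity-mult}, which is compatible with the claimed incidences.

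\emph{Which singular points $q(B)$ passes through.} In $S$ the curve $B$ has class $2H_1+H_2$, so $B^2=4$ and $q(B)^2=4/(2n)=2/n$ as a $\mathbb Q$-divisor on $Y$. The corner orbits meet $B$ only in the orbit of $(0,0)$, since $(0,0),(\infty,\infty)\in B$ while $(0,\infty),(\infty,0)\notin B$. So $q(B)$ passes through the $\tfrac1n(1,2)$ point and avoids the $\tfrac1n(1,n-2)$ point. Among the reflection-fixed points, $B$ contains $(1,1)$ and $(-1,1)$, lying over $\phi_1=2$ and $\phi_1=-2$ respectively. I will check directly from the fixed-point equations $x^2=\zeta^{j}$, $z^2=\zeta^{2j}$ of $r^js$ that the $G_n$-orbits of the other two $A_1$ points do not meet $B$ except in these two orbits.

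\emph{Correction terms.} Writing $\nu^*q(B)=C+\sum c_iF_i$, the coefficients are computed from the inverse of the (negative of the) intersection matrix of the exceptional chain. At an $A_1$ point through which the curve passes transversally the correction is $1/2$, and $C$ meets the $(-2)$-curve, which is $U_j$ in the chain $U_j-V_j-W_j$ (up to naming). At the $\tfrac1n(1,2)$ point the chain is $[k+1,2]$. If the strict transform meets the end curve $G_1$, the drop is the corresponding diagonal entry of the inverse matrix, namely $2/n$ (the minor $[2]$ divided by $n$). Then
$$
C^2=\frac2n-\frac12-\frac12-\frac2n=-1.
$$
This also pins down the incidence: meeting $G_2$ instead would give a drop of $(k+1)/n$ and a non-integral $C^2$.

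\emph{Main obstacle.} The step I expect to be delicate is confirming the local picture at $(0,0)$: that $B$ is smooth and transverse to the relevant toric divisor, so that its strict transform meets exactly the end component $G_1$ transversally and nothing else in the chain. I would do this torically. In the local coordinates $(x,z)$ with weights $(1,2)$, the curve $z=x^2$ is invariant and in the quotient corresponds to the invariant monomial $z/x^2$. Checking which cone of the resolution fan contains its valuation direction identifies $G_1$; the same check confirms $C\cdot G_1=1$ and $C\cdot G_2=0$. A global cross-check is adjunction on $Z$: $K_Z\cdot C=-1$ is equivalent to $C^2=-1$ for a smooth rational curve.
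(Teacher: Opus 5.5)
Your proposal is correct and follows the paper's proof. It uses the same computation: $\overline B^2=4/(2n)=2/n$, minus $1/2$ at each of the two $A_1$ points over $\pm2$, minus the diagonal entry $2/n$ of the inverse of $\begin{pmatrix}k+1&-1\\-1&2\end{pmatrix}$ at the $\frac1n(1,2)$ point; your dihedral-invariant argument for the section property and your fixed-point check fill in details the paper leaves implicit.

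The step you flag as the main obstacle is already settled by your own section argument. Since $\overline B$ passes through the $\frac1n(1,2)$ point, $C\cdot F=1$, and $G_2$ has multiplicity $k+1$ in the fiber over $\infty$, $C$ must meet $G_1$ with intersection number one and miss $G_2$. The toric check is therefore only a confirmation.
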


\begin{proof}
The curve $B$ has class $2H_1+H_2$ and hence $B^2=4$.
Since $B\to B/G_n$ has degree $2n$, the image $\overline B\subset Y$ satisfies
$$
\overline B^2=\frac{4}{2n}=\frac2n.
$$
The curve $\overline B$ passes through one $A_1$ point in each finite reducible fiber, contributing a correction $1/2$ at each point to the self-intersection of its strict transform.
It also passes through the cyclic quotient point of type $\frac1n(1,2)$.
In the resolution chain $[-(k+1),-2]$, the curve $C$ meets the component $G_1$ transversely.
The corresponding diagonal entry of the inverse of the positive intersection matrix
$$
\begin{pmatrix}k+1&-1\\-1&2\end{pmatrix}
$$
is $2/n$.
Therefore
$$
C^2=\frac2n-\frac12-\frac12-\frac2n=-1.
$$
The assertions about the components met by $C$ follow from the local equations of $B$ and from the construction of the quotient fibers.
\end{proof}

\section{The plane model and deformation to very general points}\label{sec:plane}

We now contract fiber components disjoint from the section $C$.
In each finite reducible fiber contract first $V_j$ and then the image of $W_j$.
In the fiber over infinity contract successively
$$
T,\ G_2,\ H_1,\ H_2,\ldots,H_k.
$$
Each curve has self-intersection $-1$ at the moment it is contracted.
The resulting ruled surface contains the image of $C$ as a section of square $-1$, hence is $\F_1$.
Contracting this section gives a birational morphism
$$
Z\longrightarrow\PP^2
$$
whose inverse is a sequence of
$$
1+4+(k+2)=k+7
$$
point blow-ups.
Thus
$$
Z\simeq \Bl_{k+7}\PP^2
$$
with generally infinitely near centers.

Let
$$
h,e_1,\ldots,e_{k+7}
$$
be the resulting plane marking by total transform classes.
The first blow-up corresponds to the section, so
$$
C=e_1,\\;\; F=h-e_1,
$$
where $F$ is the class of a general fiber of $f$.
For the two finite reducible fibers we may choose the marking so that
\begin{equation}\label{eq:finite-plane-classes}
U_j=h-e_1-e_{2j}-e_{2j+1},\\;\; V_j=e_{2j+1},\\;\; W_j=e_{2j}-e_{2j+1}\quad (j=1,2).
\end{equation}
For the fiber at infinity, the classes are
\begin{align}
G_1&=h-e_1-e_6-e_7-\cdots-e_{k+6},\label{eq:G1class}\\
G_2&=e_{k+6}-e_{k+7},\\;\; T=e_{k+7},\label{eq:G2Tclass}\\
H_1&=e_{k+5}-e_{k+6}-e_{k+7},\label{eq:H1class}\\
H_j&=e_{k+6-j}-e_{k+7-j}\\;\; (2\leq j\leq k).\label{eq:Hjclass}
\end{align}
One checks directly that the weighted sums from \eqref{eq:infinity-mult} recover $F$.

Put
$$
M=\nu^*A\in\Pic(Z).
$$
The class $M$ is orthogonal to every exceptional curve of $\nu$.
The curves $V_j$ and $T$ are not exceptional for $\nu$, and no orthogonality is imposed on them.
The projection formula gives
\begin{equation}\label{eq:MC-MF}
M\cdot C=n-2,\\;\; M\cdot F=2n.
\end{equation}
Indeed, $q^*A=\cO_S(2n(n-4),2n)$, the invariant curve $B$ has class $(2,1)$ and maps to its quotient with degree $2n$, while the pullback of a general fiber of $f$ is the sum of $2n$ first-ruling fibers.

The equations \eqref{eq:finite-plane-classes}--\eqref{eq:MC-MF} determine $M$ uniquely.

\begin{proposition}\label{prop:M-plane}
In the plane marking above,
\begin{equation}\label{eq:M-plane}
M=(3n-2)h-(n-2)e_1-n(e_2+e_3+e_4+e_5)-4(e_6+\cdots+e_{k+5})-2(e_{k+6}+e_{k+7}).
\end{equation}
In particular,
$$
M^2=4n(n-4)=A^2.
$$
\end{proposition}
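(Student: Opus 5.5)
The plan is to determine $M$ by pure linear algebra in the plane marking $h,e_1,\ldots,e_{k+7}$ of $\Pic(Z)$, which has rank $k+8$. We write $M=dh-\sum_i m_ie_i$ and impose the following conditions:
\begin{itemize}
\item the two projection-formula conditions \eqref{eq:MC-MF};
\item orthogonality of $M=\nu^*A$ to every $\nu$-exceptional curve, namely $U_1,U_2,W_1,W_2$ in the finite reducible fibers and $G_1,G_2,H_1,\ldots,H_k$ in the fiber at infinity.
\end{itemize}
This gives $2+4+(k+2)=k+8$ linear conditions. We solve them explicitly. Uniqueness of the solution also justifies the paper's assertion that these equations determine $M$.

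First, $C=e_1$ gives $M\cdot C=m_1=n-2$. Since $F=h-e_1$, the condition $M\cdot F=d-m_1=2n$ gives $d=3n-2$. In the finite fibers, by \eqref{eq:finite-plane-classes}:
\begin{itemize}
\item $M\cdot W_j=0$ gives $m_{2j}=m_{2j+1}$;
\item $M\cdot U_j=0$ gives $d-m_1-m_{2j}-m_{2j+1}=0$, i.e.\ $m_{2j}+m_{2j+1}=2n$.
\end{itemize}
Hence $m_2=\cdots=m_5=n$.

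At infinity we use \eqref{eq:G1class}--\eqref{eq:Hjclass}. The conditions $M\cdot H_j=0$ for $2\le j\le k$ chain together to give $m_6=m_7=\cdots=m_{k+5}=:a$. The condition $M\cdot G_2=0$ gives $m_{k+6}=m_{k+7}=:b$. The condition $M\cdot H_1=0$ gives $a=2b$. Finally, $M\cdot G_1=0$ reads
$$
d-m_1-ka-b=0,\quad\text{i.e.}\quad 2n=(2k+1)b=nb .
$$
So $b=2$ and $a=4$, which is exactly \eqref{eq:M-plane}. The system is triangular in this order, so the solution is unique. This confirms that the listed conditions pin down $M$.

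For the self-intersection we compute directly:
$$
M^2=(3n-2)^2-(n-2)^2-4n^2-16k-8=(4n^2-8n)-8(2k+1)=4n(n-4).
$$
This agrees with $A^2$, as it must, since $\nu^*$ preserves intersection numbers. The computation is therefore also a consistency check on the classes \eqref{eq:G1class}--\eqref{eq:Hjclass}.

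The only genuinely nonformal input is the correctness of the plane classes of the fiber components, together with the values \eqref{eq:MC-MF}; both are taken from the preceding construction. If anything needs care, it is checking that the classes at infinity have the self-intersections \eqref{eq:infinity-self} and that their weighted sum with the multiplicities \eqref{eq:infinity-mult} equals $F$. That check is direct, and once it is done the proposition is a short linear computation.
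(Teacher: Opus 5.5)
Your proposal is correct and follows the paper's proof essentially step by step. You get $m_1=n-2$ and $d=3n-2$ from $M\cdot C$ and $M\cdot F$, obtain $m_2=\cdots=m_5=n$ from orthogonality to $W_j$ and $U_j$, reduce the fiber at infinity to one unknown via $H_j$, $G_2$ and $H_1$, close with $M\cdot G_1=0$ giving $nb=2n$, and compute $M^2$ directly; your count of $k+8$ conditions and the triangularity remark only make explicit the paper's claim that these equations determine $M$ uniquely.
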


\begin{proof}
Write
$$
M=dh-\sum_{i=1}^{k+7}m_ie_i.
$$
Since $C=e_1$ and $F=h-e_1$, equations \eqref{eq:MC-MF} give
$$
m_1=n-2,\\;\; d-m_1=2n,
$$
so $d=3n-2$.
Orthogonality to $W_1,W_2$ gives
$$
m_2=m_3,\\;\; m_4=m_5,
$$
and orthogonality to $U_1,U_2$ then gives
$$
m_2=m_3=m_4=m_5=n.
$$

At infinity, orthogonality to $G_2$ gives $m_{k+6}=m_{k+7}$, orthogonality to $H_1$ gives $m_{k+5}=2m_{k+6}$, and the remaining $H_j$ show that
$$
m_6=\cdots=m_{k+5}=2m_{k+6}.
$$
Finally, $M\cdot G_1=0$ yields
$$
\sum_{i=6}^{k+6}m_i=2n.
$$
Since $n=2k+1$, this becomes
$$
(2k+1)m_{k+6}=2n,
$$
so $m_{k+6}=m_{k+7}=2$ and $m_6=\cdots=m_{k+5}=4$.
This proves \eqref{eq:M-plane}.
The self-intersection is either computed directly or follows from $M=\nu^*A$.
\end{proof}

Choose a very general smooth point $y\in Y$ satisfying Corollary~\ref{cor:quotient-epsilon}, and let $a=\nu^{-1}(y)$.
Blow up $a$ and denote the new exceptional class by $e_{k+8}$.
The class
\begin{equation}\label{eq:Dspecial}
D=M-2\sqrt{n(n-4)}\,e_{k+8}
\end{equation}
is nef, because it is the pullback to the resolution of the nef Seshadri class on $\Bl_yY$.

Consider the irreducible parameter space of ordered sequences of $k+8$ point blow-ups of $\PP^2$, allowing infinitely near centers.
It carries the natural marking $h,e_1,\ldots,e_{k+8}$, and the special surface just constructed is one of its fibers.
The locus where all centers are distinct proper points of $\PP^2$ is dense and open.
As in the proof of Proposition~\ref{prop:balanced-reflection}, the effectivity locus of any fixed integral marked class having negative intersection with \eqref{eq:Dspecial} is a proper closed subset because it does not contain the special fiber.
There are only countably many integral classes.
Hence the marked class \eqref{eq:Dspecial} is nef on the blow-up of $k+8$ very general distinct plane points.

We now make one quadratic transformation, centered at three of the four points carrying multiplicity $n$ in \eqref{eq:M-plane}, and reorder the first $k+7$ points afterwards.
The coefficient of the last point $e_{k+8}$ is unchanged.
The first $k+7$ coefficients transform as
$$
(3n-2;n^4,n-2,4^k,2^2)\longmapsto(3n-4;n,(n-2)^4,4^k,2^2),
$$
where superscripts denote repetitions.
Thus, on the blow-up of $k+8$ very general plane points, the class
\begin{equation}\label{eq:nef-Ln}
(3n-4)H-nE_1-(n-2)(E_2+\cdots+E_5)-4(E_6+\cdots+E_{k+5})-2(E_{k+6}+E_{k+7})-2\sqrt{n(n-4)}E_{k+8}
\end{equation}
is nef.

Fixing the first $k+7$ very general points and interpreting the last one as a very general point of their blow-up, \eqref{eq:nef-Ln} gives
\begin{equation}\label{eq:epsilon-lower}
\eps(L_n;x)\geq2\sqrt{n(n-4)}.
\end{equation}
A direct calculation gives
\begin{equation}\label{eq:Lsquare}
L_n^2=(3n-4)^2-n^2-4(n-2)^2-16k-8=4n(n-4).
\end{equation}
Thus the right-hand side of \eqref{eq:epsilon-lower} is the volume bound.
It remains only to verify that $L_n$ is ample.

\section{Ampleness and proof of the main theorem}\label{sec:ampleness}

We use the ampleness criterion of Hanumanthu \cite[Theorem~2.1]{Hanumanthu}.
For convenience, we recall only the part that we need.

\begin{theorem}[Hanumanthu]\label{thm:Hanumanthu}
Let $X\to\PP^2$ be the blow-up at $r$ general points and let
$$
L=dH-\sum_{i=1}^r m_iE_i,
$$
where $m_1\geq\cdots\geq m_r>0$.
Then $L$ is ample if the following inequalities hold:
\begin{align*}
d&>m_1+m_2,\\
2d&>m_1+\cdots+m_5,\\
3d&>2m_1+m_2+\cdots+m_7,
\end{align*}
and, for every $2\leq s\leq r$,
$$
d^2\geq\frac{s+3}{s+2}\sum_{i=1}^s m_i^2.
$$
\end{theorem}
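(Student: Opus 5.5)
Since this is Hanumanthu's criterion \cite[Theorem~2.1]{Hanumanthu}, we would not reprove it in full. We would recall the structure of the argument, which is a Nakai--Moishezon check combined with a lower bound on the self-intersection of curves through general points. Write an irreducible curve on $X$ as $C=aH-\sum b_iE_i$ with $a>0$, $b_i\geq0$, after permuting points so that $b_1\geq\cdots\geq b_s>0=b_{s+1}=\cdots$. We must show $L^2>0$ and $L\cdot C>0$ for every such $C$, and also $L\cdot E_i=m_i>0$.

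First, $L^2>0$ follows from the quadratic inequality with $s=r$, since $d^2\geq\frac{r+3}{r+2}\sum m_i^2>\sum m_i^2$. Second, the three linear inequalities handle the low-degree curves that can be very negative on a blow-up of general points. These are the strict transforms of lines through two points ($a=1$, $b_1=b_2=1$), conics through five points, and cubics through seven points with a double point at one of them. Because the $m_i$ are ordered decreasingly, the worst case of each type gives exactly $d>m_1+m_2$, $2d>m_1+\cdots+m_5$ and $3d>2m_1+m_2+\cdots+m_7$.

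For all remaining curves we would use Xu's inequality for irreducible curves through general points, $a^2\geq\sum_{i=1}^s b_i^2-b_s$, together with Cauchy--Schwarz:
$$
L\cdot C=da-\sum_{i=1}^s m_ib_i\geq da-\Bigl(\sum_{i\le s} m_i^2\Bigr)^{1/2}\Bigl(\sum_{i\le s} b_i^2\Bigr)^{1/2}.
$$
Since $b_s\leq b_i$ for all $i\leq s$, we have $s b_s^2\leq\sum b_i^2$. Combining this with Xu's bound controls $\sum b_i^2$ by $a^2$ up to a factor depending only on $s$. The factor $\frac{s+3}{s+2}$ in the hypothesis is designed so that $d^2\sum_{i\le s}b_i^2<\dots$ fails only in a bounded set of small-degree cases, and these are the ones already covered by the linear inequalities and direct enumeration. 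The main obstacle is exactly this bookkeeping: showing that every curve where the quadratic estimate is not strict has degree at most $3$ and is among the listed ones. We would follow Hanumanthu's case analysis there rather than redo it.

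Finally, we note a point relevant to the paper's use of the criterion. It is stated for general points, and in Section~\ref{sec:ampleness} it will be applied to very general points, which is a stronger hypothesis, so it applies.
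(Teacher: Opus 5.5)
Your proposal matches the paper, which gives no proof of this statement and simply quotes \cite[Theorem~2.1]{Hanumanthu}; deferring to that source is exactly what the paper does, and your remark that a criterion stated for general points applies to the very general points used in Section~\ref{sec:ampleness} is correct. If you ever fill in the deferred bookkeeping, note that the exceptional set is made finite by the integrality split on $b_s$, not by the real-variable estimate $sb_s^2\leq\sum b_i^2$ on its own: for $b_s\geq2$, Xu's inequality gives $a^2>(s+2)b_s$ when $s\geq3$, while for $b_s=1$ either $a^2\geq s+2$, or all $b_i=1$ and then $a\leq3$.
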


\begin{proposition}\label{prop:ample}
For every odd $n=2k+1\geq7$, the divisor $L_n$ from \eqref{eq:Ln} is ample on the blow-up of $k+7$ general points of $\PP^2$.
\end{proposition}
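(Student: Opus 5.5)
The plan is to apply Theorem~\ref{thm:Hanumanthu} directly with $r=k+7$ and $d=3n-4$. First I will list the multiplicities of \eqref{eq:Ln} in non-increasing order:
$$
m_1=n,\quad m_2=\cdots=m_5=n-2,\quad m_6=\cdots=m_{k+5}=4,\quad m_{k+6}=m_{k+7}=2.
$$
This ordering is legitimate because $n\geq7$ gives $n-2\geq5>4$. All $m_i$ are positive. Moreover $n\geq7$ means $k\geq3$, so there are at least three multiplicities equal to $4$, and in particular $m_6=m_7=4$.

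Next I will check the three linear inequalities, each of which reduces to a trivial bound on $n$:
\begin{align*}
m_1+m_2&=2n-2<3n-4 &&\iff n>2,\\
m_1+\cdots+m_5&=5n-8<6n-8=2d,\\
2m_1+m_2+\cdots+m_7&=2n+4(n-2)+8=6n<9n-12=3d &&\iff n>4.
\end{align*}

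For the quadratic inequalities, I will not treat each $s$ separately. Instead I will use a single uniform bound. Write $S_s=\sum_{i\leq s}m_i^2$. Then $S_s\leq S_{r}$, and $(s+3)/(s+2)\leq 5/4$ for all $s\geq2$. The total $S_r$ is read off from \eqref{eq:Lsquare}:
$$
S_r=d^2-L_n^2=(3n-4)^2-4n(n-4)=5n^2-8n+16.
$$
So it suffices to show $4(3n-4)^2\geq5(5n^2-8n+16)$, which is equivalent to
$$
11n^2-56n-16\geq0.
$$
This holds at $n=7$, since $539-392-16>0$. The left side is increasing for $n\geq7$, so it holds for all odd $n\geq7$. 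Hence every inequality in Theorem~\ref{thm:Hanumanthu} is satisfied, and $L_n$ is ample on the blow-up of $k+7$ general points. The statement then also holds for very general points, which is the situation of Theorem~\ref{thm:plane-main}.

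I expect no genuine obstacle here. The only points needing care are the correct ordering of the multiplicities, including the observation that $k\geq3$ so that $m_6=m_7=4$ in the cubic-type inequality, and the shortcut of bounding all quadratic inequalities by the case $s=r$ combined with the crude factor $5/4$. If the uniform bound had been too weak, the fallback would be to check the finitely many block endpoints $s=1,5,k+5,k+7$ separately. The computation above shows this fallback is unnecessary.
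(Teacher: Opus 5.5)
Your proposal is correct and follows the paper's proof essentially verbatim. It uses the same ordering $n,(n-2)^4,4^k,2^2$ and the same three linear checks, and it bounds all quadratic inequalities uniformly by $\frac54\Sigma$ with $\Sigma=5n^2-8n+16$, reducing to $11n^2-56n-16>0$; the only cosmetic difference is that you read off $\Sigma$ from $L_n^2=4n(n-4)$ instead of summing the squares directly.
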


\begin{proof}
Order the multiplicities as
$$
n,\quad (n-2)^4,\quad 4^k,\quad 2^2.
$$
Set $d=3n-4$.
The first three inequalities in Theorem~\ref{thm:Hanumanthu} become
\begin{align*}
3n-4&>2n-2,\\
6n-8&>5n-8,\\
9n-12&>6n,
\end{align*}
and all hold for $n\geq7$.

For the last family of inequalities, observe that for every $s\geq2$,
$$
\frac{s+3}{s+2}\leq\frac54.
$$
Moreover the sum of the squares of all multiplicities is
$$
\Sigma=n^2+4(n-2)^2+16k+8=5n^2-8n+16.
$$
Hence it is enough to prove
$$
d^2>\frac54\Sigma.
$$
After clearing denominators this is
$$
4(3n-4)^2-5(5n^2-8n+16)=11n^2-56n-16>0.
$$
The last expression is positive at $n=7$ and is strictly increasing for $n\geq7$.
Thus all hypotheses of Theorem~\ref{thm:Hanumanthu} are satisfied.
\end{proof}

\begin{proof}[Proof of Theorem~\ref{thm:plane-main}]
Proposition~\ref{prop:ample} proves that $L_n$ is ample.
The nef class \eqref{eq:nef-Ln} gives
$$
\eps(L_n;x)\geq2\sqrt{n(n-4)}
$$
at a very general point.
By \eqref{eq:Lsquare}, the volume bound gives the reverse inequality.
Therefore
$$
\eps(L_n;x)=2\sqrt{n(n-4)}.
$$
Finally, $n(n-4)$ is not a square for $n>4$ by Remark~\ref{rem:irrational-multipoint}, so the Seshadri constant is irrational.
\end{proof}

\begin{remark}[The natural plane-model divisor]\label{rem:natural-divisor}
Before the quadratic transformation one obtains the equally explicit divisor
$$
M_n^{\mathrm{pl}}=(3n-2)H-n(E_1+E_2+E_3+E_4)-(n-2)E_5-4(E_6+\cdots+E_{k+5})-2(E_{k+6}+E_{k+7}),
$$
after reordering the coefficients of \eqref{eq:M-plane}.
It also has square $4n(n-4)$ and the same maximal Seshadri constant at a very general point.
The quadratic transformation is used only to obtain the more reduced expression \eqref{eq:Ln}.
For $n=5$, two quadratic transformations starting from the natural plane-model class recover the divisor $9H-3(E_1+\cdots+E_5)-2(E_6+\cdots+E_9)$ of Laface--Ugaglia.
\end{remark}

\section{Further remarks and possible extensions}

The argument separates naturally into four ingredients:
\begin{enumerate}[label=\textup{(\roman*)}]
\item a special configuration carrying an explicitly constructed nef isotropic ray;
\item a rational curve $\Gamma$ of negative even self-intersection;
\item a deformation in which $N_{\Gamma/\mathcal X}$ is balanced in the precise sense required by Proposition~\ref{prop:balanced-reflection};
\item an identification of the reflected isotropic ray with a Seshadri ray.
\end{enumerate}
Only the first and fourth ingredients depend strongly on the chosen surface.
This suggests that the balanced-reflection principle may be useful in settings quite different from $\PP^1\times\PP^1$.

In the present construction the numerical coincidence
$$
\Gamma^2=-4
$$
is independent of $n$ because the curve $\Gamma$ has class $(n-2,1)$ and contains $2n$ orbit points:
$$
2(n-2)-2n=-4.
$$
At the same time, the normal bundle in the threefold total space remains
$$
\cO_{\PP^1}(-2)^{\oplus2}
$$
for every odd $n$.
Thus the reflection mechanism is stable in the parameter $n$. The changing arithmetic is entirely absorbed by the de Jonqui\`eres dynamics and by the quotient singularity $\frac1n(1,n-2)$.

It would be interesting to search for analogous constructions on other ruled surfaces, on rational surfaces carrying Coxeter-type automorphisms, and on families in which a rational curve of square $-2a$ has balanced normal bundle $\cO(-a)^{\oplus2}$.
The abstract formulation in Proposition~\ref{prop:balanced-reflection} indicates that a finite group quotient is not essential to the reflection step itself.
Its role here is to convert a multipoint equality upstairs into a one-point equality downstairs and to provide a concrete rational plane model after resolution.

\begin{question}
Can the balanced-reflection principle be combined with a dynamical nef ray on a smooth family in such a way that the quotient step is avoided altogether?
In particular, can one obtain infinite families of irrational one-point Seshadri constants on smooth surfaces of fixed Picard number?
\end{question}

\begin{question}
For even values of $n$, can one replace the present choice of weights by a different dihedral or more general finite-group action so that the de Jonqui\`eres iteration remains nondegenerate and produces an analogous irrational limiting ray?
\end{question}

We hope to come back to these questions in a future work.

\paragraph{Acknowledgements.} We are deeply grateful to Piotr Pokora and Tomasz Szemberg for many inspiring conversations that motivated parts of this research, as well as for their valuable comments.

The present work was carried out during a Workshop on Algebraic Geometry held in September 2026. We thank Dwór Sieraków for hospitality and the University of the National Education Commission for financial support under grant DNa.711.404.2026.PN.

\noindent
Grzegorz Malara, Łukasz Merta, Justyna Szpond and Marcin Zieli\'nski\\
Department of Mathematics,
University of the National Education Commission, Krakow,
Podchor\c a\.zych 2,
PL-30-084 Krak\'ow, Poland.\\
\textit{E-mail address:} \texttt{grzegorzmalara@gmail.com}\\
\textit{E-mail address:} \texttt{lukasz.merta@uken.krakow.pl}\\
\textit{E-mail address:} \texttt{szpond@gmail.com}\\
\textit{E-mail address:} \texttt{marcin.zielinski@uken.krakow.pl}

\end{document}